\newtheorem{theo}{Theorem}
\newtheorem{propo}{Proposition}
\newtheorem{prop}{Proposition}[section]
\newtheorem{lem}[prop]{Lemma}
\theoremstyle{plain}
\theoremstyle{definition}
\numberwithin{equation}{section}
\newcommand{\R}{\mathbb{R}}
\newcommand{\I}{\mathcal{I}}
\newcommand{\g}{\mathbf{g}}
\def \be{\begin{equation}}
\def \ee{\end{equation}}
\def \t0{\rightarrow 0} % Vers z?©ro
\def \supp{\mathrm{supp }} % Support
\def \div{\mathrm{div} \,} % Divergence
\def \1{\mathbf{1}} % Fonction caract?©ristique
\def\cd{\mathsf{c_{\d}}}
\def\({\left(}
\def\){\right)}
\def \KNbeta{K_{N,\beta}}
\def \PNbeta{\mathbb{P}_{N, \beta}} % Mesure de Gibbs ?? \beta
\renewcommand{\subset}{\subseteq}
\renewcommand{\subset}{\subseteq}
\renewcommand{\tilde}{\widetilde}
\renewcommand{\div}{\divg}
\newcommand{\indic}{\mathds{1}}
\renewcommand{\hat}{\widehat}
\def\XXint#1#2#3{{\setbox0=\hbox{$#1{#2#3}{\int}$}
     \vcenter{\hbox{$#2#3$}}\kern-.5\wd0}}
\def \XN{{X}_N}
\def\HN{\mathcal{H}_N}
\def\HNV{\mathcal{H}_N^{V}}
\def \FN{F_N}
\def \KNbeta{K_{N,\beta}}
\def \Fluct{\mathrm{Fluct}}
\def \fluct{\mathrm{fluct}}
\def \muv{\mu_V} % Eq. measure
\def\Esp{\mathbb{E}} % Esp?©rance
\def \KNbeta{K_{N,\beta}}
\def \ZNbeta{Z_{N,\beta}}
\def \Vt{V_{t}}
\def\I{\mathcal{I}}
\def \muv{\mu_V}
\def\g{\mathsf{g}}
\def\d{\mathsf{d}}
\def\muv{\mu_V}
\def\indic{\mathbf{1}}
\def\div{\mathrm{div}\,}
\def\PotXN{\mathrm{Pot}_N}
\def\D{\mathsf{D}}
\begin{document}
%%%%%%%%%%%%%%%%%
\title{On the maximum of the potential of a general two-dimensional Coulomb gas}
\author{Luke Peilen}
\address{Wachmann Hall 544, Temple University, 1805 N. Broad St., Philadelphia, PA 19103}
\email{luke.peilen@temple.edu}
\date{\today}
\maketitle

\begin{abstract}
We determine the leading order of the maximum of the random potential associated to a two-dimensional Coulomb gas for general $\beta$ and general confinement potential, extending the result of \cite[Theorem 1]{LLZ23}. In the case $\beta=2$, this corresponds to the (centered) log-characteristic polynomial of either the Ginibre random matrix ensemble for $V(x)=\frac{|x|^2}{2}$ or a more general normal matrix ensemble. The result on the leading order asymptotics for the maximum of the log-characteristic polynomial is new for random normal matrices.

We rely on connections with the classical obstacle problem and the theory of Gaussian Multiplicative Chaos. We make use of a new concentration result for fluctuations of $C^{1,1}$ linear statistics which may be of independent interest.
\end{abstract}

%%%%%%%%%%%%
\section{Introduction}

%%%%%
\subsection{The Model}
We are interested in studying the two-dimensional Coulomb gas with general confinement potential. This is an interacting particle system with point masses $\XN=(x_1,x_2,\dots,x_N) \in \R^{2N}$ distributed according to the Gibbs measure
\begin{equation}\label{Gibbs}
d\PNbeta(\XN)=\frac{1}{\ZNbeta}e^{-\beta \HN^V(\XN)}~d\XN,
\end{equation}
where $\HN^V$ is given by 
\begin{equation}\label{Hamil}
\HN^V(\XN)=\frac{1}{2}\sum_{i \ne j}\g (x_i-x_j)+N\sum_{i=1}^N V(x_i).
\end{equation}
$\g$ is the logarithmic kernel
\begin{equation}\label{log}
\g(x)=-\log|x|
\end{equation}
and $V$ assumed to grow sufficiently fast at infinity. $\beta>0$ denotes the inverse temperature, which we take to be order one. 

From Frostman \cite{F35} (see \cite{ST97}), if the potential $V$ is lower semicontinuous, bounded below and satisfies the growth condition
\begin{equation}
\liminf_{|x|\rightarrow +\infty}\frac{V(x)}{\log|x|}>1
\end{equation}
then the continuous approximation of the Hamiltonian 
\begin{equation}\label{Continuous energy}
\I_V(\mu)=\frac{1}{2}\int\int \g(x-y)~d\mu(x)d\mu(y)+\int V(x)~d\mu(x)
\end{equation}
has a unique, compactly supported minimizer $\muv$ (called the \textit{equilibrium measure}) among the set of probability measures on $\R$, characterized by the Euler-Lagrange equation
\begin{equation}\label{Euler-Lagrange equation}
\begin{cases}
\g \ast \muv+V=c_V & \text{on }\Sigma \\
\g \ast \muv+V \geq c_V & \text{otherwise,}
\end{cases}
\end{equation}
where $\Sigma$ denotes the \textit{support} of $\muv$. If $\Sigma$ is connected, we will say we are in the \textit{one-cut} regime; otherwise, $\Sigma$ may consist of multiple connected components, which is usually called the \textit{multi-cut} regime. $c_V$ is a fixed constant depending on the potential $V$. In the following, we denote the corresponding \textit{effective potential} by 
\begin{equation}\label{effective potential}
\zeta_V:=\g \ast \muv+V-c_V
\end{equation}
and will review some important properties in $\S \ref{Assumptions}$. We also denote
\begin{equation}\label{frach}
\mathfrak{h}_0:=\g \ast \muv
\end{equation}
to match the notation of \cite{LLZ23}. Furthermore, if we assume that $V$ is continuous, then the empirical measures $\mu_N$ given by 
\begin{equation}\label{Empirical measures}
\mu_N:=\frac{1}{N}\sum_{i=1}^N \delta_{x_i}
\end{equation}
converge almost surely to $\muv$ under $\PNbeta$ (cf \cite{BZ98}).

With this understanding of the leading order behavior, one can split the Hamiltonian and write (cf \cite[Lemma 2.1]{AS21} or \cite[Lemma 2.2]{LS18})
\begin{align*}
\HNV(\XN)&=N^2\I_V(\muv)+2N\sum_{i=1}^N\zeta_V(x_i)+\FN(\XN,\muv)
\end{align*}
where $\FN(\XN,\muv)$ is a next-order energy defined by 
\begin{equation}\label{next order energy}
\FN(\XN,\muv)=\frac{1}{2}\int\int_{\Delta^c}\g(x-y) \left(\sum_{i=1}^N \delta_{x_i}-N\muv\right)(x) \left(\sum_{i=1}^N \delta_{x_i}-N\muv\right)(y)
\end{equation}
for a configuration of points $\XN \subset \R^N$ and $\Delta \subset \R^2$ denotes the \textit{diagonal} 
\begin{equation}\label{diagonal}
\Delta:=\{(x,y) \in \R^2:x=y\}.
\end{equation}
This allows us to rewrite (\ref{Gibbs}) as 
\begin{equation}\label{noG}
d\PNbeta(\XN)=\frac{1}{\KNbeta}\exp \left(-\beta \left(\FN(\XN,\muv)+2N\sum_{i=1}^N \zeta_V(x_i)\right)\right)~d\XN
\end{equation}
where $\KNbeta$ is the \textit{next-order partition function}
\begin{equation}\label{nopart}
\KNbeta=\int_{\R^{2N}}\exp \left(-\beta \left(\FN(\XN,\muv)+2N\sum_{i=1}^N \zeta_V(x_i)\right)\right)~d\XN.
\end{equation}
To match the discussion in \cite{LLZ23}, we will take this as our definition of the two-dimensional Coulomb gas going forward, and simply refer to $\FN(\XN,\muv)$ as the energy of the system.
%%%%%

%%%%%
\subsection{Main Result}
We are interested in studying the maximum of the \textit{Coulomb gas potential} generated by a point configuration $\XN$ and background measure $\muv$ given by 
\begin{equation}\label{PotXN}
\PotXN(z)=\int \log|z-x|\left(\sum \delta_{x_i}-N\muv\right)(x)
\end{equation}
over a closed disk of radius $r$ centered at $x \in \Sigma$, which we denote by $\D(x,r)$. We have chosen to match the notation of \cite{LLZ23} to emphasize the connection with their result. Note that if the Coulomb gas (\ref{Gibbs}) corresponds to the eigenvalues of a random matrix model, then this potential corresponds to the (centered) log-characteristic polynomial of the matrix model. This has been studied in \cite{Lam20} for the Ginibre random matrix model, and the authors in \cite{LLZ23} studied the two-dimensional Coulomb gas potential in the case $V(x)=\frac{x^2}{2}$; we are interested in extending the law of large numbers that they prove for $\PotXN$ in that regime to general $V$. We accomplish this in the following theorem.

\begin{theo}\label{theo}
Let $V$ satisfy assumptions (\textbf{A1})-(\textbf{A3}), given in $\S \ref{Assumptions}$. Suppose additionally that $\Sigma$ is one-cut and that $V \in C^5(\R^2)$. Let $r>0$ be such that $\D(x,r) \subset \Sigma$ and $\D(x,r) \cap \partial \Sigma=\emptyset$. Then, we have
\begin{equation}\label{thm}
\frac{1}{\log N}\max_{z \in \D(x,r)}\PotXN(z) \rightarrow \frac{1}{\sqrt{\beta}}
\end{equation}
as $N \rightarrow +\infty$, with the convergence in probability.
\end{theo}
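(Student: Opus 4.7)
The plan is to prove matching upper and lower bounds: $\limsup_{N} \max_{\D(x,r)} \PotXN/\log N \le 1/\sqrt{\beta}$ and $\liminf_{N} \max_{\D(x,r)} \PotXN/\log N \ge 1/\sqrt{\beta}$, in probability. The upper bound will come from a concentration estimate for $C^{1,1}$ linear statistics applied to a mollified logarithmic kernel, together with a union bound over a fine net. The lower bound will adapt the Gaussian Multiplicative Chaos (GMC) second-moment method of \cite{LLZ23} to general $V$, exploiting the regularity of $\muv$ and $\mathfrak{h}_0$ furnished by the obstacle problem \eqref{Euler-Lagrange equation}.

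\textbf{Upper bound.} Pick $\eta = N^{-\alpha}$ with $\alpha$ just below $1/2$, set $\g_\eta := \g \ast \chi_\eta$ for a smooth bump $\chi_\eta$ of scale $\eta$, and define the smoothed potential
\[
\PotXN^\eta(z) := \int \g_\eta(z-x)\,\Bigl(\sum_{i}\delta_{x_i} - N\muv\Bigr)(x).
\]
The function $x\mapsto \g_\eta(z-x)$ is $C^{1,1}$ with norm $\lesssim \eta^{-2}$, while $\|\nabla \g_\eta\|_{L^2}^2 \sim 2\pi\log(1/\eta)$. Invoking the new concentration result announced in the abstract gives sub-Gaussian tails
\[
\mathbb{P}\!\left(\PotXN^\eta(z) \ge t\right) \le \exp\!\Bigl(-\tfrac{\beta\, t^2}{\log(1/\eta)}(1+o(1))\Bigr).
\]
Taking $t = (1/\sqrt{\beta}+\delta)\log N$ and union-bounding over an $\eta$-net of $\D(x,r)$ (of cardinality $\lesssim \eta^{-2}$) yields the sharp upper bound on $\PotXN^\eta$ once $\alpha$ is chosen close enough to $1/2$. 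Finally, the smoothing error $|\PotXN(z) - \PotXN^\eta(z)|$ is $o(\log N)$ uniformly in $z$ on the high-probability event $\{\FN(\XN,\muv) \le C\log N\}$, using that local empirical counts and nearest-neighbor spacings are controlled by the next-order energy (as in \cite{AS21,LS18}).

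\textbf{Lower bound.} Following \cite{LLZ23}, I will consider, for $\gamma$ in the sub-critical GMC regime, the random measure
\[
\nu_\gamma^N(dz) := N^{-\gamma^2/(2\beta)}\exp\!\bigl(\gamma\, \PotXN(z)\bigr)\,dz, \qquad z \in \D(x,r).
\]
The first- and second-moment estimates
\[
\Esp\bigl[\nu_\gamma^N(A)\bigr] \to c_\gamma\, \muv(A), \qquad \Esp\bigl[\nu_\gamma^N(A)^2\bigr] \lesssim \iint_{A\times A}\frac{dz\,dw}{|z-w|^{\gamma^2/\beta}},
\]
are obtained by tilting $V \mapsto V - \gamma\log|\cdot-z|/N$ (and the analogous two-point tilt for the second moment), expanding the ratio of tilted to untilted next-order partition functions via the splitting \eqref{noG}, and applying the $C^{1,1}$ concentration to control the remainder. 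The one-cut assumption ensures that the tilted equilibrium measure is a small absolutely continuous perturbation of $\muv$ with smooth density on compact subsets of $\Sigma^{\circ}$. Paley--Zygmund then gives $\nu_\gamma^N(\D(x,r)) \ge c$ with positive probability; combined with the deterministic bound $\nu_\gamma^N(\D(x,r)) \le |\D(x,r)|\,N^{-\gamma^2/(2\beta)}\,e^{\gamma\max \PotXN}$, this forces $\max \PotXN \ge (\gamma/\beta)\log N + o(\log N)$, and letting $\gamma \uparrow \sqrt{2\beta}$ yields the matching lower bound.

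\textbf{Main obstacle.} The principal difficulty is the new $C^{1,1}$ concentration estimate with sharp dependence on $\|\xi\|_{C^{1,1}}$, uniform in $N$, which is what allows the mollification scale $\eta$ to be pushed toward the microscopic scale $N^{-1/2}$; this sharpness is essential for recovering the constant $1/\sqrt{\beta}$. Its proof will require controlling the perturbed partition function $\KNbetat$ for polynomially large tilts $V \mapsto V + t\xi/N$ and analyzing the corresponding perturbed equilibrium measure through the obstacle problem. On the lower-bound side, the delicate point is the second moment estimate for $\nu_\gamma^N$ in the absence of an explicit $\muv$: this is where the one-cut assumption and $V \in C^5(\R^2)$ are used, via uniform positivity and regularity of $\muv$ on compact subsets of $\Sigma^{\circ}$, to keep the tilted equilibrium measures tractable and to transfer the Ginibre-based GMC analysis of \cite{LLZ23} to the general setting.
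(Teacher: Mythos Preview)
Your upper bound contains a genuine gap. The concentration estimate of Proposition~\ref{Coro1} controls exponential moments in terms of $\|\Delta\xi\|_{C^{0,\alpha}(\Sigma)}+\|\xi\|_{C^{1,1}(U)}$, \emph{not} in terms of $\|\nabla\xi\|_{L^2}$. For your test function $\xi=\g_\eta(z-\cdot)$ these norms blow up like $\eta^{-2}$, so the effective variance in the resulting sub-Gaussian tail is of order $\eta^{-4}$ rather than $\log(1/\eta)$. With $\eta=N^{-\alpha}$ this bound is far too weak to survive the union bound over an $\eta^{-2}$-sized net and certainly cannot recover the sharp constant $1/\sqrt\beta$. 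The new concentration result is simply not designed to be applied directly to a scale-dependent mollified logarithm.

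The paper's route is different and avoids this obstacle by borrowing the decomposition of \cite{LLZ23}: one writes $\varphi_{z,\epsilon}=\rho_\epsilon*\log|\cdot-z|-g$, where $g$ is a \emph{fixed} function solving $\Delta g=2\pi\chi$ with $\chi\in C_c^\infty$, $\int\chi=1$, chosen so that $\int\Delta\varphi_{z,\epsilon}=0$. That mean-zero-Laplacian condition is precisely what is needed to invoke the sharp CLT/partition-function comparison machinery of \cite{S23,LS18}, already stated for general $\muv$, which \emph{does} yield variance $\sim\|\nabla\varphi_{z,\epsilon}\|_{L^2}^2\sim\log(1/\epsilon)$ and hence the correct leading constant. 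The entire role of the new $C^{1,1}$ estimate is to show $\log\Esp[e^{t\Fluct_N(g)}]=O(t+t^2)$ for this one fixed, scale-independent function---and for that purpose the crude dependence on $\|g\|_{C^{1,1}}$ is harmless. The paper accomplishes this by writing $g=(g-c\mathfrak h_0)+c\mathfrak h_0$ and applying Proposition~\ref{genfluct} to each piece; the obstacle-problem regularity $\mathfrak h_0\in C^{1,1}$ from Proposition~\ref{potreg} is exactly what makes this work for general $V$. Likewise, the lower bound in the paper is not re-proved at all: it is observed that the GMC argument of \cite[\S4]{LLZ23} already uses only perturbations with $\int\Delta\phi=0$ and general-$\muv$ comparison results, so it transfers verbatim. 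In short, the single new ingredient needed to pass from quadratic to general $V$ is the order-one control of $\Fluct_N(g)$ (equivalently $\Fluct_N(\mathfrak h_0)$), and that is the whole point of Proposition~\ref{Coro1}.
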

The leading order asymptotics above agree with those already known for the Ginibre ensemble (\cite[Theorem 1.1]{Lam20}) and general $\beta$ with quadratic potential (\cite[Theorem 1.1]{LLZ23}). The result is new for the log-characteristic polynomial of normal random matrices, and for the general Coulomb gas. Importantly, in line with the approach used in \cite{LLZ23}, we extend the scope of \textit{nondeterminantal} tools for studying such interacting particle systems.

In order to prove Theorem \ref{theo}, we need to establish the following bound on fluctuations of linear statistics for test functions with a small amount of regularity, which may be of independent interest. The additional assumption $V \in C^5$ is needed for the main theorem, but is not necessary for our fluctuation result. For a test function $\xi$, we let $\Fluct_N(\xi)$ denote the random variable
\begin{equation}\label{fluct}
\Fluct_N(\xi)=\int \xi(x)\left(\sum \delta_{x_i}-N\muv\right)(x).
\end{equation}
$\xi^\Sigma$ denotes the unique bounded harmonic extension of a test function $\xi$ outside of $\Sigma$.
\begin{propo}\label{Coro1}
Let $\xi\in C^{1,1}(\R^2)$ and suppose $\Delta \xi \in C^{0,\alpha}(\Sigma)$ for some $\alpha>0$. Suppose also that there is a constant $C>0$ such that   
\begin{equation*}
|\xi(x)|\leq C(\log|x|+1).
\end{equation*}
Suppose that $\Sigma$ is one-cut. Then
\begin{multline}\label{corcon}
\Esp \left[\exp \left(-\beta tN\Fluct_N(\xi)\right)\right] \lesssim\\
\exp\( \beta N^2t^2\max\(1,\|\Delta \xi\|_{C^{0,\alpha} (\Sigma)}+\|\xi\|_{C^{1,1}(U)}\)^2+\max(1,\beta)N|t|\(\|\Delta \xi\|_{C^{0,\alpha}(\Sigma)}+\|\xi\|_{C^{1,1}(U)}\)\).
\end{multline}

If $\Sigma$ is multi-cut, the same control holds if we assume in addition that 
\begin{equation}\label{MC}
\int_{\partial \Sigma_i}\nabla \xi^\Sigma \cdot \hat{n}=0
\end{equation}
for every connected component $\Sigma_i$ of $\Sigma$.
\end{propo}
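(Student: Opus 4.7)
The plan is to use the transport / comparison-of-partition-functions method that has become standard for concentration of linear statistics in Coulomb gases (following \cite{LS18} and subsequent developments). The starting identity, which follows directly from (\ref{fluct}) and (\ref{Gibbs}), is
\begin{equation*}
\Esp\left[e^{-\beta t N \Fluct_N(\xi)}\right] = e^{\beta N^2 t \int \xi \, d\muv} \cdot \frac{Z_{N,\beta}(V + t \xi)}{Z_{N,\beta}(V)},
\end{equation*}
so the task reduces to bounding the ratio of partition functions for the perturbed potential $V_t := V + t \xi$.

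To do so, I would construct a smooth transport $\Phi_t(x) = x + t \psi(x) + O(t^2)$ pushing $\muv$ forward to the equilibrium measure $\muvt$ of $V_t$. Since in two dimensions $\muv = \frac{1}{2\pi} \Delta V$ on $\Sigma$, the first-order perturbation of the equilibrium measure inside $\Sigma$ is $\frac{t}{2\pi}\Delta \xi$, and $\psi$ may be taken as a gradient field solving a Poisson-type equation driven by $\Delta \xi$. The hypotheses $\xi \in C^{1,1}(\R^2)$ and $\Delta \xi \in C^{0,\alpha}(\Sigma)$ are exactly what yields a $C^{1,\alpha}$ field inside $\Sigma$, which is then glued to the harmonic extension $\xi^\Sigma$ outside $\Sigma$ to give a Lipschitz vector field on $\R^2$ with controlled norm $M := \max(1, \|\Delta \xi\|_{C^{0,\alpha}(\Sigma)} + \|\xi\|_{C^{1,1}(U)})$. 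In the multi-cut setting, the condition (\ref{MC}) is precisely the compatibility condition ensuring solvability of the transport equation on each connected component of $\Sigma$, since otherwise the perturbation would try to redistribute mass between components.

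Applying the change of variables $x_i = \Phi_t(y_i)$ in $Z_{N,\beta}(V_t)$ and Taylor-expanding the integrand to second order in $t$, after exploiting the defining property of $\psi$, yields a schematic expression
\begin{equation*}
\log \frac{Z_{N,\beta}(V_t)}{Z_{N,\beta}(V)} = -\beta N^2 t \int \xi \, d\muv + \log \Esp\bigl[\exp\bigl(t L_1(\YN) + t^2 L_2(\YN) + \Err_N(\YN)\bigr)\bigr],
\end{equation*}
where $L_2$ is deterministically bounded by $O(N^2 M^2)$ (it is a double integral of a smooth symmetric kernel built from $\psi \otimes \psi$ against the fluctuating empirical measure, plus a smooth one-body contribution from $\tfrac{1}{2}D^2 V \cdot \psi^{\otimes 2}$), while $L_1$ is a linear statistic of a function of norm $O(M)$ arising from the logarithm of the Jacobian $|\det D\Phi_t|$ and boundary-layer corrections. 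The deterministic bound on $L_2$ produces the Gaussian-type term $\beta N^2 t^2 M^2$ in (\ref{corcon}), and the expectation of $L_1$ together with subleading expansion terms contributes the linear correction $\max(1,\beta) N |t| M$.

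The main obstacle will be controlling the stochastic remainder $\Err_N$, which comes from the genuine nonlinearity of $\log|\Phi_t(x) - \Phi_t(y)|$ at second order and the Taylor error of $\log|\det D\Phi_t|$. These pieces are handled either by deterministic bounds using the Lipschitz norm of $\psi$ or by re-expressing them as linear statistics of $C^{0,\alpha}$ functions which can be reinserted into the same estimate by a bootstrap; the key a priori input is the standard lower bound $\FN(\XN,\muv) \geq -C N \log N$ together with the energy-concentration estimates for the two-dimensional Coulomb gas, which absorb $\Err_N$ into the leading terms. Collecting these pieces and exponentiating yields the claimed bound, uniformly in the sign of $t$ by running the same argument with $\xi$ replaced by $-\xi$.
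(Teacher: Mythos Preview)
Your outline follows the same transport scheme as the paper, and the broad structure (ratio of partition functions, change of variables by $\Phi_t=\mathrm{Id}+t\psi$, Taylor expansion, energy input) is correct. However, your bookkeeping of the first-order terms has a real gap. After the change of variables the Hamiltonian difference splits as $T_0+T_1+T_2$ according to the number of $\fluct_N$ factors. The defining property of $\psi$ (that it solves the integral equation (\ref{IE-MT}), not merely a Poisson equation for a gradient field) kills the first-order part of $T_1$ only; it does \emph{not} touch the first-order part of $T_2$, which is
\[
-\frac{\beta t}{2}\iint_{\Delta^c}\nabla\g(x-y)\cdot\bigl(\psi(x)-\psi(y)\bigr)\,d\fluct_N(x)\,d\fluct_N(y).
\]
This is a double fluctuation integral at order $t$, not a linear statistic, and it does not appear in your $tL_1$, $t^2L_2$, or $\Err_N$ as you describe them. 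The kernel is pointwise bounded by $\|\psi\|_{C^{0,1}}$, but the trivial estimate via $|\fluct_N|\lesssim N$ gives only $O(\beta|t|N^2)$, one power of $N$ too large for $|t|\sim N^{-1}$.

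The paper closes this with the Rosenzweig--Serfaty commutator estimate, which dominates the double integral above by $C\|\psi\|_{C^{0,1}}\bigl(\FN(\XN,\muv)+\tfrac{N}{2}\log N\bigr)$, and then applies the exponential-moment bound on $\FN+\tfrac{N}{2}\log N$. This is precisely the energy input you flag, but you have attached it to the wrong term: the genuine second-order remainder of $\g(\phi_t(x)-\phi_t(y))-\g(x-y)$ that you place in $\Err_N$ is in fact deterministically $O(t^2\|\psi\|_{C^{0,1}}^2)$ pointwise (this is the paper's Lemma \ref{CTE}) and needs no stochastic control at all. Your proposed ``bootstrap via linear statistics of $C^{0,\alpha}$ functions'' cannot reach the commutator, since that term is quadratic in $\fluct_N$.

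A secondary point: the construction of $\psi$ outside $\Sigma$ is not merely gluing to $\xi^\Sigma$. One needs $\psi\cdot\nabla\zeta_V=\xi^\Sigma-\xi$ there, and the verification that the resulting field actually satisfies (\ref{IE-MT}) globally passes through single-layer-potential identities and a Liouville argument; this is a nontrivial step carried out explicitly in the paper's proof.
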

We obtain Proposition \ref{Coro1} from the more general Proposition \ref{genfluct}, which we state and prove in Section $\S \ref{MTE}$. As the next subsection makes clear, this is enough regularity to control the fluctuations of $\mathfrak{h}_0$ at order $1$, which is the key technical difficulty in extending \cite[Theorem 1]{LLZ23}.

This is an improvement in required regularity over the CLT results (see in particular \cite[Theorem 1.2]{BBNY19}, \cite[Theorem 1]{LS18}, \cite[Theorem 3]{S23}) and the order one fluctuation bound of \cite[Theorem 1]{S23}, which require at least $C^{2,1}$ regularity on the test function $\xi$. 
%We note that the above result is independent of any vanishing conditions on $\xi$ given for instance in \cite{LS18} and \cite{S23}; the vanishing conditions there are necessary for a CLT in the multicut regime, but since we are only interested in concentration we can dispense with those requirements. 
It should be noted that some assumption on $\xi$ beyond Lipschitz regularity is necessary; \cite{PG23} shows that concentration of measure happens at order $\sqrt{N}$, and discusses optimality (\cite[Proposition 5.1]{PG23}). It is unclear if one could improve our regularity assumptions with the current methods; \cite[Theorem 1]{RV07} at least suggests that we should be able to obtain order one fluctuations as soon as the test function is $C^1$.
%%%%%

%%%%%
\subsection{Assumptions}\label{Assumptions}
We will need to make some assumptions on $V(x)$ to guarantee a sufficiently regular $\partial \Sigma$ and to guarantee sufficient regularity on $\mathfrak{h}_0$. As discussed before, we first need some regularity and growth on the potential $V$. This is given by the following.
\begin{enumerate}
\item[\textbf{(A1)}] \textbf{-Growth and Regularity: } $V \in C^{2,\alpha}(\R^2)$ for some $\alpha>0$, and 
\begin{equation*}
\liminf_{|x|\rightarrow +\infty}\frac{V(x)}{\log|x|}>1.
\end{equation*}
\end{enumerate}
This guarantees the existence of the equilibrium measure $\muv$ (see \cite{ST97}), and gives us enough regularity to assume that the second derivative of $V$ is uniformly bounded in the \textit{droplet} $\Sigma=\supp(\muv)$. 

Next, it is observed in \cite[Section 2.5]{S15} that the Euler-Lagrange equation (\ref{Euler-Lagrange equation}) for $\mathfrak{h}_0$ can be interpreted as an \textit{obstacle problem} for $\mathfrak{h}_0$, namely
\begin{equation}\label{obst prob}
\min \left((-\Delta)\mathfrak{h}_0, \mathfrak{h}_0-(c_V-V)\right)=0.
\end{equation}
With this interpretation, we can draw on much of the theory of the classical obstacle problem. First, we will need the following \textit{nondegeneracy assumption}.
\begin{enumerate}
\item[\textbf{(A2)}] \textbf{-Nondegeneracy: }
There exists a constant $\lambda>0$ such that
\begin{equation*}
\Delta V\geq \lambda
\end{equation*}
in the \textit{coincidence set} $\{\mathfrak{h}_0-(c_V-V)=0\}$.
\end{enumerate}
This guarantees that $\Sigma=\{\mathfrak{h}_0-(c_V-V)=0\}$, and thus that $\zeta_V$ given by (\ref{effective potential}) is strictly positive in $\Sigma^c$. 

The boundary of the coincidence set, which here is given by $\partial \Sigma$, is known as the \textit{free boundary}. Points on the free boundary are either \textit{regular} or \textit{singular} (see \cite{C98}), and at regular points the regularity of the free boundary is well understood. In particular, it is shown in \cite{C77} that the free boundary is as regular as the obstacle at all \textit{regular points}. Thus, we make the following assumption.

\begin{enumerate}
\item[\textbf{(A3)}] \textbf{-Regularity of the Free Boundary: }
All points of $\partial \Sigma$ are regular in the sense of \cite{C98}. Furthermore, $\partial \Sigma$ is a finite union of $C^{2,\alpha}$ curves.
\end{enumerate}
This is conjectured to be generic in the work of Schaeffer \cite{S74}, and is shown for dimensions $d \leq 4$ (in particular, this is generic for $d=2$) in the recent work \cite[Theorem 1.1]{FRoS20}. It is shown in \cite{C77} that $\partial \Sigma$ is $C^{1,\gamma}$ (see also \cite[Theorem 7]{C98}) in a neighborhood of all regular points; we need the additional regularity of the second derivative to complete the analysis in Proposition \ref{Coro1}.

With assumptions \textbf{(A1)-(A3)}, we have the following regularity on $\mathfrak{h}_0$, which we use repeatedly. The result dates back to \cite{F72}; we use the version stated in \cite{C98} (and proven in \cite{CK80}).
\begin{prop}[\cite{C98}; Theorem 2; \cite{CK80}]\label{potreg}
Suppose $V$ satisfies \textbf{(A1)-(A3)}. Then, there exists a set $U \supset \Sigma$ such that $\mathfrak{h}_0 \in C^{1,1}(U)$.
\end{prop}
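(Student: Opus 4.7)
The plan is to reduce the proposition to the classical interior $C^{1,1}$ regularity theorem for the obstacle problem, which is exactly the content of the references cited in the statement. Setting $\psi := c_V - V$, the Euler--Lagrange characterization \eqref{Euler-Lagrange equation} says $\mathfrak{h}_0 \geq \psi$ everywhere with equality on $\Sigma$. Combined with $-\Delta \mathfrak{h}_0 = 2\pi \muv \geq 0$ and the fact that $\muv$ is supported on $\Sigma$, this gives $-\Delta \mathfrak{h}_0 = 0$ on the noncoincidence set $\Sigma^c$. We thus recover the variational inequality \eqref{obst prob}, so the cited machinery applies.

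The second step is to verify that the hypotheses of the regularity theorem are in place. Assumption \textbf{(A1)} provides $V \in C^{2,\alpha}(\R^2)$, so the obstacle $\psi = c_V - V$ is $C^{2,\alpha}$, and in particular $C^{1,1}$. Assumption \textbf{(A2)} supplies the nondegeneracy $-\Delta \psi = \Delta V \geq \lambda > 0$ on the coincidence set, which is the genuine nontrivial input that the classical theory requires to rule out solutions that detach from the obstacle with less than quadratic growth. Applying the theorem of Frehse \cite{F72}, stated as \cite[Theorem 2]{C98} and proved in \cite{CK80}, then produces $\mathfrak{h}_0 \in C^{1,1}$ in a neighborhood of $\overline{\Sigma}$.

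To exhibit the open set $U$ required by the statement, I would use that $\muv$ is compactly supported in $\Sigma$: this makes $\mathfrak{h}_0 = \g \ast \muv$ real-analytic on $\R^2 \setminus \Sigma$, decaying like $-\log|x|$ at infinity. Combining this exterior smoothness with the interior $C^{1,1}$ bound from the previous paragraph lets us take $U$ to be any bounded open neighborhood of $\Sigma$ contained in the neighborhood of $\overline{\Sigma}$ produced by the theorem.

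The genuine difficulty, which is hidden inside the black-boxed theorem, is the matching of second derivatives across the free boundary $\partial \Sigma$: on $\Sigma$, $\mathfrak{h}_0$ coincides with the smooth function $c_V - V$, while on $\Sigma^c$ it is harmonic, and one must show that the two pieces glue with a uniformly bounded Hessian. The nondegeneracy \textbf{(A2)} is precisely what prevents the Laplacian from oscillating wildly as one crosses $\partial \Sigma$, and is therefore the essential structural input beyond the smoothness of $V$. I note that assumption \textbf{(A3)} plays no role in this proposition per se but will be used elsewhere in the paper; a direct proof of the present statement would bypass it entirely.
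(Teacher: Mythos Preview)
The paper does not actually prove this proposition; it is stated as a direct citation of \cite[Theorem~2]{C98} and \cite{CK80}, with the surrounding text only remarking that the result goes back to Frehse~\cite{F72}. Your proposal is therefore not so much an alternative proof as a verification that the hypotheses of the cited theorem are met, and on that count it is correct: you identify the obstacle as $c_V - V$, check that \textbf{(A1)} gives the required $C^{2,\alpha}$ regularity of the obstacle and \textbf{(A2)} the nondegeneracy, and then invoke the black-boxed interior $C^{1,1}$ estimate. Your remark that \textbf{(A3)} is not needed here is also accurate---the classical $C^{1,1}$ bound for the obstacle problem is an a priori estimate that does not require any information about the free boundary; the paper simply bundles all three assumptions for convenience, reserving \textbf{(A3)} for the later analysis in Proposition~\ref{Coro1}.
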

%%%%%

%%%%%
\subsection{Connection with Literature}
This note is in direct response to the article \cite{LLZ23}, wherein the authors prove a law of large numbers for the maximum of the potential of a two-dimensional Coulomb gas with quadratic confinement potential. Their result extended the earlier work of \cite{Lam20}, which proved this convergence for the \textit{Ginibre ensemble}, a nonHermitian random matrix ensemble composed of i.i.d. complex standard Gaussian entries. The eigenvalue density for this ensemble corresponds to a $2$d Coulomb gas at inverse temperature $\beta=2$ with confinement potential $V(x)=\frac{|x|^2}{2}$ (cf \cite{F10}). A related question, the asymptotics of the moments of the characteristic polynomial for such matrices, was examined in \cite{WW19}. \cite{LLZ23} thus is an extension of this result to general inverse temperature $\beta>0$, and we in turn extend that result to general potential $V$ which yield a \textit{one-cut} equilibrium measure.

A key ingredient in understanding the potential of the two-dimensional Coulomb gas is a fine understanding of the behavior of fluctuations of linear statistics at small scales. This question was initially studied in \cite{RV07}, and similar questions were asked for generalizations called \textit{normal matrix models} in \cite{AHM11}, \cite{AHM15} and \cite{AKS23}.  This question is still of interest for Coulomb gases with general potential due to various connections with mathematical physics, in particular the fractional quantum Hall effect, cf \cite{STG99}. Approaches to the Coulomb gas using an \textit{electric energy} interpretation have yielded a fruitful understanding of the behavior of Coulomb gases at small scales in two dimensions in \cite{L17}, and in higher dimensions and varying temperature regimes in \cite{AS21}. This led to generalizations of the fluctuation results described above in \cite{LS18} and \cite{S23}. A related study was also accomplished in \cite{BBNY17} and \cite{BBNY19}.

The behavior of the lower bound relies on the theory of Gaussian Multiplicative chaos, as discussed in \cite{CFLW21}. These measures were introduced by Kahane in \cite{K85}, and are a family of random fractal measures associated to a generalized log-correlated Gaussian field $X$ defined formally by 
\begin{equation*}
d\mu^\gamma ``=" \frac{e^{\gamma X(x)}}{\Esp e^{\gamma X(x)}}~dx
\end{equation*}
for parameters $\gamma>0$. Since many random matrix ensembles and, more generally, Coulomb gases behave asymptotically like log-correlated fields, one expects the weak convergence of measures 
\begin{equation}\label{GMC}
d\mu_N^\gamma:=\frac{e^{\gamma \PotXN(x)}}{\Esp e^{\gamma \PotXN(x)}}
\end{equation}
to $\mu^\gamma$ associated to an appropriate limiting Gaussian field. One reason that these results are useful is that the measures $\mu^{\gamma}$ are primarily supported on so-called ``thick points" where the field $X$ is large. Thus, these kinds of convergence results can be used to obtain information about extreme values of $\PotXN$. 

This convergence has been established for the GUE in \cite{CFLW21}, and for the GOE and GSE in \cite{K21}. The proof of the lower bound for $\PotXN$ in \cite[Theorem 1]{LLZ23}, which extends to our case, relies on a convergence as in (\ref{GMC}) for regularizations of $\PotXN$ at scales $\epsilon\downarrow 0$. It is expected that this convergence holds without regularization, although that question is currently open.

Related questions for the one-dimensional log-gas have also seen extensive study. This is an interacting particle system given by (\ref{Hamil}) on the real line, and corresponds to the eigenvalue distributions of certain classical \textit{Hermitian} matrix ensembles (namely the GOE, GUE and GSE in $\beta=1$, $2$ and $4$ respectively). Central limit theorems for fluctuations of linear statistics of the log-gas go back to \cite{J98} and have subsequently been generalized in \cite{BG13}, \cite{BG16}, \cite{Sh13}, \cite{Sh14}, \cite{BLS18}, \cite{BL18}, \cite{Lam21}, \cite{BMP22} and \cite{P24}. Questions regarding the maximum of the potential field have also seen significant study, for instance in the works \cite{ABB17}, \cite{PZ18}, \cite{CMN18} and \cite{PZ22}.

%add discussion of GMC works?
%%%%%

%%%%%
\subsection{Proof Structure and Outline of Paper}
%%%%%
In Section 2, we introduce some important terminology and review key elements of the proof of \cite[Theorem 1]{LLZ23}. We describe how almost all of the necessary steps transfer immediately to our model. As we discuss in Section 2, the entirety of \cite{LLZ23} generalizes immediately once one can show the truncation error estimate
\begin{equation}\label{expg}
\PNbeta\left(\left|\Fluct_N(g)\right|\geq (\log N)^{0.8}\right)\leq \exp \left(-\frac{1}{2}(\log N)^{1.5}\right)
\end{equation}
for a function analogous to the $g$ found in \cite[Proposition 3.2]{LLZ23}. 

As discussed in \cite[Appendix A]{LLZ23}, this follows immediately from showing that both the exponential moments of $\Fluct_N(\mathfrak{h}_0)$ and $\Fluct_N(g-c\mathfrak{h}_0)$ are typically order $1$, where $c$ is chosen to make $\Delta(g-c\mathfrak{h}_0)$ mean zero. Since $\mathfrak{h}_0 \in C^{1,1}(U)$ (Proposition \ref{potreg}) and $\Delta \mathfrak{h}_0=\cd\muv=\Delta V \in C^{0,\alpha}(\Sigma)$, we can apply Proposition \ref{Coro1} to directly find this control for $\mathfrak{h}_0$. Despite the possible lack of regularity of $g$ at points in $\Sigma$, we show that since $\Delta(g-c\mathfrak{h}_0)$ is mean zero and sufficiently regular we can still invert (\ref{IE-MT}) and obtain the requisite fluctuation control. This allows us to establish (\ref{expg}) in our model, leading to Theorem \ref{theo}.

Section 3 is then devoted to the proof of Propositions \ref{genfluct} and \ref{Coro1}, which uses Johansson's method \cite{J98}, the transport approach of \cite{LS18} and a detailed analysis of the resulting expansion with an eye towards minimizing the requisite regularity of our test functions.
%%%%%

%%%%%
\subsection{Acknowledgements}
The author would like to thank Brian Rider and David Padilla-Garza for useful discussions. They would also like to thank Ofer Zeitouni for providing early access to drafts of \cite{LLZ23}. 
%%%%%
%%%%%%%%%%%%%%

%%%%%%%%%%%%%%
\section{Proof of the Main Theorem}

%%%%%
\subsection{Infrastructure from the Quadratic Case}
In this section, we discuss how the proof of \cite[Theorem1]{LLZ23} goes through without issue as soon as the fluctuations of $\mathfrak{h}_0$ are shown to be order $1$ on the level of exponential moments.

\subsubsection{\cite[$\S 2$]{LLZ23}, Preliminaries on Coulomb Gases}
This section quotes important results on local energy laws and fluctuations of linear statistics from \cite{AS21} and \cite{S23}, in addition to providing a useful description and quoting necessary results on comparison of partition functions for perturbations of $\muv$ by transport. All of the results other than \cite[Lemma 2.8]{LLZ23} are already stated for general measure $\muv$ with density in $C^3(\Sigma)$ that satisfies
\begin{equation*}
\frac{3}{4}\leq \muv \leq \frac{3}{2}.
\end{equation*}
The choice of constants are arbitrary, and instead can be rephrased for $0<\lambda \leq \muv \leq \Lambda$, where the constants would then depend on $\lambda$ and $\Lambda$. This assumption is guaranteed by \textbf{(A2)}, since
\begin{equation*}
(\ref{Euler-Lagrange equation}) \implies \muv=\frac{\Delta V}{\cd} \geq \lambda>0
\end{equation*}
and is $C^3(\Sigma)$ on $\Sigma=\supp(\muv)$. 

The one proposition that makes use of specifically the constant and radial nature of the equilibrium measure in the case $V(x)=\frac{|x|^2}{2}$ is \cite[Lemma 2.8]{LLZ23}; a careful reading of their arguments, however show that this is only used to prove the fluctuation control \cite[Corollary A.8]{LLZ23} and yields the critical \cite[Proposition 3.2]{LLZ23}, which we instead here prove by appeal to Proposition \ref{genfluct}.

\subsubsection{\cite[$\S 3$]{LLZ23}, Upper bound for Law of Large Numbers}
This section is devoted to the proof of 
\begin{equation}\label{UB}
\lim_{N \rightarrow \infty}\PNbeta \left(\max_{z \in \mathsf{D}(x,r)}\PotXN(z) \geq \frac{\alpha \log N}{\sqrt{\beta}}\right)=0
\end{equation}
for all $\alpha>1$ and fixed $r$ with $\mathsf{D}(x,r)\subset \Sigma$. The first key step observes that 
\begin{equation*}
\int \Delta \log |x-z|~dx=2\pi,
\end{equation*}
which prevents the authors from using the theorems from \cite[$\S 2$]{LLZ23} which require $\int \Delta f=0$, with $f$ sufficiently smooth. Thus, the authors consider instead
\begin{equation*}
\varphi_{z,\epsilon}=\rho_\epsilon \ast \log|x-z|-g,
\end{equation*}
where $\rho_\epsilon$ denotes the standard mollifier and $g$ is a solution to 
\begin{equation*}
\Delta g=2\pi \chi,
\end{equation*}
with $\int \chi=1$ and $\chi \in C_0^\infty(\D(z,r')$ with $\D(z,r')\subset \D(x,r)$. The proof then uses this function to prove (\ref{UB}) for a regularization of $\PotXN$, the proof of which does not require that $\muv$ be the specific equilibrium measure for $V(x)=\frac{|x|^2}{2}$. The conclusion for $\PotXN$ then follows by comparison, using \cite[Proposition 3.2]{LLZ23}, which follows from \cite[Corollary A.8]{LLZ23} and states that 
\begin{equation}\label{order1}
\log \Esp \left[e^{t \Fluct_N[g]}\right]=O(t+t^2)
\end{equation}
for an implied constant dependent only on $\beta$ and $V$. We will establish (\ref{order1}) by application of Proposition \ref{genfluct} at the end of this section.

%\cm{Make sure \cite[Lemma 3.5]{LLZ23} is okay for us}

\subsubsection{\cite[$\S 3$]{LLZ23}, Lower bound for Law of Large Numbers and Gaussian Multiplicative Chaos}
This section is devoted to the proof of 
\begin{equation}\label{LB}
\lim_{N \rightarrow \infty}\PNbeta \left(\sqrt{\beta}\sup_{z \in \mathcal{U}}\PotXN(z) \geq \alpha \log \frac{1}{\epsilon(N)}\right)=0
\end{equation}
for $\alpha<2$, $\mathcal{U}$ a fixed small ball and $\epsilon(N) \gg N^{-1/2}$, from which the lower bound follows. The method uses the theory of Gaussian multiplicative chaos; (\ref{LB}) is deduced from the convergence of the measures
\begin{equation*}
\mu_k^\gamma:=\frac{e^{\gamma \sqrt{\beta} \Fluct_N[\varphi_{z, e^{-k}}]}}{\Esp \left[e^{\gamma \sqrt{\beta} \Fluct_N[\varphi_{z, e^{-k}}]}\right]}
\end{equation*}
to $\mathrm{GMC}_\gamma$; this result is established by the theory built in \cite[Section 3]{CFLW21} and \cite[Section 2]{LOS18}, coupled with the comparison of partition functions along mesoscopic perturbations with $\int \Delta \phi=0$ drawn on in \cite[$\S 2$]{LLZ23}. In particular, it doesn't require that $\muv$ be the equilibrium measure for $V(x)=\frac{|x|^2}{2}$.
%%%%%

%%%%%
\subsection{Proof of Main Theorem}
\begin{proof}[Proof of Theorem \ref{theo}]
We see that it is sufficient to establish (\ref{order1}). Let $c$ be an order $1$ constant such that 
\begin{equation*}
\int \Delta(g-c\mathfrak{h}_0)=0
\end{equation*}
and let $\xi=g-c\mathfrak{h}_0$, which is also at least Lipschitz on all of $\R^2$ and satisfies $|\xi(x)|\leq C(\log|x|+1)$ (this follows from the compact support of $\chi$ and $\muv$). Consider the equation
\begin{equation*}
\begin{cases}
\div(\psi \muv)=-\frac{1}{\cd}\Delta \xi & \text{in }\Sigma \\
\psi \cdot \hat{n}=0 & \text{on }\partial \Sigma \\
\psi=\psi^\perp & \text{in }U \setminus \Sigma
\end{cases}
\end{equation*}
where $\psi^\perp$ is a $C^{1,\alpha}$ vector field satisfying $\psi \cdot \nabla \zeta_V=0$ in $U \setminus \Sigma$. The interior equation is well-posed since $\int_{\Sigma} \Delta \xi=0$, and $\psi \in C^{1,\alpha}(\Sigma)$ because $\chi$ and $\muv$ are both $\alpha$-H\"older continuous densities in $\Sigma$. The vector field $\psi^\perp$ can be constructed by hand as in the proof of \cite[Lemma 3.4]{LS18}; one considers an extension of the tangential component $\psi$ on $\Sigma$ (which here is just $\psi$) and subtracts off the projection of $\tilde{\psi}$ onto $\nabla \zeta_V$. 

This transport solves (\ref{IE-MT}). Integrating by parts and using $\psi \cdot \nabla \zeta_V=0$ we find 
\begin{multline*}
\psi(x)\cdot \nabla \zeta_V(x)+\xi(x)-\int \nabla \g(x-y)\cdot \psi(y)~d\muv(y) \\
=\xi-\int_{\Sigma} \g(x-y) \(-\frac{1}{\cd}\Delta \xi \)=\xi-\g \ast \left(-\frac{1}{\cd}\Delta \xi\right)
\end{multline*}
since $\Delta \xi$ is only supported in $\Sigma$. However, $\xi-\g \ast \left(-\frac{1}{\cd}\Delta \xi\right)$ grows at most logarithmically at $\infty$ and 
\begin{equation*}
\Delta\(\xi-\g \ast \left(-\frac{1}{\cd}\Delta \xi\right)\)=\Delta \xi-\Delta \xi=0
\end{equation*}
on all of $\R^2$. Thus, by Liouville's theorem this is constant and so 
\begin{equation*} 
\psi(x)\cdot \nabla \zeta_V(x)+\xi(x)-\int \nabla \g(x-y)\cdot \psi(y)~d\muv(y) =c_\xi.
\end{equation*}
Therefore we can apply Proposition \ref{genfluct} to find 
\begin{equation}\label{sub}
\Esp \left[e^{t\Fluct_N[g-c\mathfrak{h}_0]}\right]=e^{O(t+t^2)}.
\end{equation}
Next, $\mathfrak{h}_0 \in C^{1,1}(U)$ and satisfies $\mathfrak{h}_0 \leq C(\log|x|+1)$; granted Proposition \ref{Coro1} then we also have
\begin{equation}\label{pot}
\Esp \left[e^{t\Fluct_N[\mathfrak{h}_0]}\right]=e^{O(t+t^2)}.
\end{equation}
Now, (\ref{sub}) and (\ref{pot}) are comparable so we find 
\begin{equation*}
\Esp \left[e^{t\Fluct_N[g]}\right]=e^{O(t+t^2)}
\end{equation*}
by H\"older, establishing (\ref{order1}) for our case and proving Theorem \ref{theo}.
\end{proof}
%%%%%
%%%%%%%%%%%%%%%%%

%%%%%%%%%%%%%%%%%
\section{Main Technical Estimate}\label{MTE}
The goal of this section is to prove Proposition \ref{Coro1}. We first establish the following. 
\setcounter{propo}{0}
\begin{prop}\label{genfluct}
Let $\xi\in C^{0,1}(\R^2)\cap C^{2,\alpha}(\Sigma)$ for some $\alpha>0$. Suppose also that there is a constant $C>0$ such that  
\begin{equation*}
|\xi(x)|\leq C(\log|x|+1).
\end{equation*}
Suppose further that there exists a Lipschitz vector field $\psi$ and a constant $c_\xi$ such that 
\begin{equation}\label{IE-MT}
\psi(x)\cdot \nabla \zeta_V(x)+\xi(x)-\int \nabla \g(x-y)\cdot \psi(y)~d\muv(y)=c_\xi
\end{equation}
in an open neighborhood $U \supset \Sigma$. Then,
\begin{multline}\label{concentration}
\Esp \left[\exp \left(-\beta tN\Fluct_N(\xi)\right)\right]\lesssim \\
\exp\( \beta N^2t^2\(\( \|\psi\|_{L^\infty}+\|\psi\|_{C^{0,1}}^2\)(1+\|\xi\|_{C^{0,1}})
\)+\max(1,\beta)N|t|\|\psi\|_{C^{0,1}}\),
\end{multline}
where the constant depends only on $V$, $C$ and $U$. In particular, if $|t| \lesssim \frac{1}{N}$ fluctuations are typically order $1$.
\end{prop}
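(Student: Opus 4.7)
The plan is to apply Johansson's method in the transport formulation of Lebl\'e--Serfaty \cite{LS18}, carefully tracking regularity at each step. Set $\Phi_t := \id + t\psi$, which for $|t|$ small compared to $\|\nabla\psi\|_\infty^{-1}$ is a bi-Lipschitz diffeomorphism of the neighborhood $U \supset \Sigma$ onto itself (exploiting $\psi\cdot\hat{n} = 0$ on $\partial\Sigma$ together with $\psi = \psi^\perp$ outside $\Sigma$). Viewing the Laplace exponent as a tilt of the Gibbs measure and performing the change of variables $x_i = \Phi_t(y_i)$ in the tilted partition function gives the identity
\begin{equation*}
\Esp\bigl[e^{-\beta tN\Fluct_N(\xi)}\bigr] = \frac{e^{\beta tN^2\int\xi\,d\muv}}{\KNbeta}\int\exp\!\Bigl(-\beta \HN^V(\Phi_t\YN) - \beta tN\sum_i\xi(\Phi_t(y_i))\Bigr)\prod_i|\det(\id+t\nabla\psi(y_i))|\,d\YN.
\end{equation*}
The goal is then to Taylor-expand the integrand in $t$, use \eqref{IE-MT} to cancel all first-order contributions against the tilt, and bound the second-order remainder using the electric-energy/local-law machinery of \cite{AS21,S23}.

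First I would isolate the first-order cancellation. Differentiating $\HN^V(\Phi_t\YN)$ at $t=0$ produces a mean-field piece $N\sum_i\psi(y_i)\cdot\nabla(\g*\muv + V)(y_i) = N\sum_i\psi(y_i)\cdot\nabla\zeta_V(y_i)$ together with a bilinear fluctuation term $\iint\nabla\g(x-y)\cdot(\psi(x)-\psi(y))\,d\nu(x)d\nu(y)$, where $\nu := \sum_i\delta_{y_i} - N\muv$. Substituting \eqref{IE-MT} into the mean-field piece rewrites it as $-N\sum_i\xi(y_i) + N^2 c_\xi + $ mean-field cross terms, and these combine with the tilt $-\beta tN\sum_i\xi(\Phi_t(y_i))$ and the prefactor $e^{\beta tN^2\int\xi\,d\muv}$ so that all linear-in-$t$ dependence on the empirical measure collapses to the explicit constant $e^{-\beta tN^2 c_\xi}$, leaving only the bilinear-fluctuation and second-order remainders.

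The remaining errors fall into three categories. (i) The Jacobian expansion gives $\sum_i\log|\det(\id+t\nabla\psi(y_i))| = t\sum_i\div\psi(y_i) + O(Nt^2\|\nabla\psi\|_\infty^2)$, whose first piece is a bounded linear statistic contributing the $\max(1,\beta)N|t|\|\psi\|_{C^{0,1}}$ term. (ii) The quadratic expansion of $\sum_i\zeta_V(\Phi_t(y_i))$ and $\sum_i\xi(\Phi_t(y_i))$ yields a Riemann sum bounded, per particle, by $t^2\|\psi\|_\infty^2(1+\|\xi\|_{C^{0,1}})$ (using that $D^2\zeta_V$ is bounded near $\Sigma$ by \textbf{(A1)}). (iii) The quadratic Hamiltonian remainder together with the bilinear fluctuation from the preceding step form a quadratic form of $\psi$ against pair fluctuations; using the electric-energy identity from \cite{AS21,L17} and the fluctuation/concentration bound of \cite[Theorem 3]{S23} applied to $\div(\psi\muv) \in C^{0,\alpha}(\Sigma)$ (whose regularity follows from \eqref{IE-MT} and the hypothesis $\Delta\xi\in C^{0,\alpha}$), this contributes at most $\exp(C\beta N^2 t^2(\|\psi\|_\infty + \|\psi\|_{C^{0,1}}^2)(1+\|\xi\|_{C^{0,1}}))$, matching the quadratic term in \eqref{concentration}.

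The principal obstacle will be controlling the second-order remainder of the logarithmic pair interaction under only $C^{0,1}$ regularity on $\psi$. The pointwise Taylor bound $|\g(\Phi_tx - \Phi_ty) - \g(x-y) - t\nabla\g(x-y)\cdot(\psi(x)-\psi(y))|$ behaves like $t^2|\psi(x)-\psi(y)|^2/|x-y|^2$, which is only marginally integrable near the diagonal; to close the estimate one must exploit the diagonal removal built into $\FN$ and the $1/N$-scale self-energy bound of \cite{AS21} rather than a naive pointwise expansion. A secondary subtlety lies outside $\Sigma$, where \eqref{IE-MT} would degenerate if interpreted naively: the specific construction $\psi=\psi^\perp$ with $\psi^\perp\cdot\nabla\zeta_V = 0$ is tailored so that the first-order transport identity persists there, while the nondegeneracy \textbf{(A2)} supplies the quadratic coercivity of $\zeta_V$ needed to suppress particles lying outside $\Sigma$.
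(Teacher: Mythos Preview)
Your overall architecture---transport $\Phi_t=\id+t\psi$, change of variables, Taylor expansion, and cancellation of the linear-in-$t$ terms via \eqref{IE-MT}---matches the paper exactly. Two concrete points, however, are misdiagnosed and would leave genuine gaps.

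First, you identify the second-order Taylor remainder $t^2|\psi(x)-\psi(y)|^2/|x-y|^2$ as the ``principal obstacle'' and call it only marginally integrable. In fact this ratio is \emph{pointwise bounded} by $t^2\|\psi\|_{C^{0,1}}^2$ because $\psi$ is Lipschitz---this is the content of the paper's Lemma~\ref{CTE}, and it makes the quadratic remainder trivial to handle (integrate against $d\fluct_N\otimes d\fluct_N$ and bound the mass by $N^2$). No self-energy or diagonal-removal argument is needed here. Second, the genuinely nontrivial term is the \emph{first-order} bilinear piece
\[
t\iint_{\Delta^c}\nabla\g(x-y)\cdot(\psi(x)-\psi(y))\,d\fluct_N(x)\,d\fluct_N(y),
\]
and your proposed tool for it---the fluctuation bound of \cite[Theorem~3]{S23}---is not applicable: that result controls $\Fluct_N(\varphi)$ for a single test function with $C^{2,1}$ regularity, not a bilinear form in $\fluct_N$, and the whole point of the present proposition is to work below $C^{2,1}$. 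The paper instead invokes the commutator estimate of \cite[Theorem~1.1]{RS22}, which bounds this bilinear form by $C\|\psi\|_{C^{0,1}}\bigl(\FN(\XN,\muv)+\tfrac{N}{2}\log N+N\bigr)$. This leaves an $\FN$ inside the exponential, and one must then close with the exponential-moment bound $\Esp\exp\bigl(s(\FN+\tfrac{N}{2}\log N)\bigr)\le e^{CN|s|/\beta}$ for $|s|\le\beta/2$, drawn from \cite{SS15-2} or \cite[Lemma~2.15]{LS18}; you do not mention this step. Minor additional points: the hypotheses $\psi\cdot\hat n=0$ and $\psi=\psi^\perp$ that you invoke are \emph{not} assumptions of Proposition~\ref{genfluct} (they belong to the specific construction in Proposition~\ref{Coro1}); and since \eqref{IE-MT} is only assumed on $U$, you need a separate truncation argument (the paper's Proposition~\ref{extfluct}) to dispose of the contribution of $\xi$ outside $U$.
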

We first truncate $\xi$ so that we only need to examine the fluctuations on the given open neighborhood $U$ of $\Sigma$.
\begin{prop}\label{extfluct}
Let $\xi$ be a measurable function such that 
\begin{equation*}
|\xi(x)|\leq C(\log|x|+1)
\end{equation*}
for some $C>0$. Let $\eta\in C_c^\infty$ be a test function such that 
\begin{equation*}
\begin{cases}
\eta \equiv 1 & \text{in }U  \\
0 \leq \eta \leq 1 & \text{everywhere} \\
\|\eta\|_{C^\infty}\lesssim 1.
\end{cases}
\end{equation*}
Then, there exists a constant $K>0$ such that for all $|t|\leq K\beta N$,
\begin{equation}
\Esp \exp \(t \Fluct[\xi(1-\eta)]\) =\exp(o(t)). 
\end{equation}
\end{prop}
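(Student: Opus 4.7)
The plan is to exploit that $f := \xi(1-\eta)$ is supported in $U^c$, where $U \supset \Sigma$, together with the exponential suppression of particles outside $U$ under $\PNbeta$ coming from the effective potential $\zeta_V$. Since $\eta \equiv 1$ on $U \supset \Sigma = \Supp \muv$, one has $\int f\,d\muv = 0$, so
$$\Fluct_N(f) = \sum_{i=1}^N f(x_i),$$
with only particles in $U^c$ contributing. The key coercivity estimate I would establish is that there exists $c_0 > 0$ with
$$\zeta_V(x) \geq c_0(1 + \log^+|x|) \quad \text{for all } x \in U^c.$$
On the compact set $U^c \cap \bar B_R$ this follows from continuity and the strict positivity of $\zeta_V$ on $\Sigma^c$ ensured by (A2); at infinity, since $\muv$ is a compactly supported probability measure we have $\g\ast\muv(x) = -\log|x| + O(|x|^{-1})$, so $\zeta_V(x) = V(x) - c_V - \log|x| + O(|x|^{-1})$ grows superlogarithmically by (A1).

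Combining this with $|f(x)| \leq C(1+\log|x|)\mathbf{1}_{U^c}(x)$ yields the pointwise inequality $|tf(x)| \leq (|t|C/c_0)\zeta_V(x)$, and choosing $K := c_0/(2C)$ guarantees, for $|t| \leq K\beta N$,
$$-2N\beta\zeta_V(x) + tf(x) \leq -\tfrac{3}{2}N\beta\zeta_V(x) \quad \text{for all } x \in \R^2.$$
Inserting this into
$$\Esp[\exp(t\Fluct_N(f))] = \frac{1}{\KNbeta}\int \exp\Bigl(-\beta \FN - 2N\beta \sum_i \zeta_V(x_i) + t\sum_i f(x_i)\Bigr)\,dX_N$$
gives the partition-function bound
$$\Esp[\exp(t\Fluct_N(f))] \leq \frac{\widetilde K_{N,\beta}}{\KNbeta},\qquad \widetilde K_{N,\beta} := \int \exp\Bigl(-\beta \FN - \tfrac{3}{2}N\beta \sum_i \zeta_V(x_i)\Bigr)\,dX_N.$$

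The main obstacle is then to show $\log(\widetilde K_{N,\beta}/\KNbeta) = o(t)$ uniformly in $|t| \leq K\beta N$. Since the modified confinement $\tfrac{3}{4}\zeta_V$ vanishes precisely on $\Sigma$, it corresponds to a Coulomb gas with the same equilibrium measure $\muv$, and the next-order partition function asymptotics of \cite{LS18, AS21} give matching leading behavior with a discrepancy that is $o(N)$, hence $o(t)$ in the allowed regime. Equivalently, I would write the ratio as $\Esp_{\PNbeta}[\exp(\tfrac{N\beta}{2}\sum_i \zeta_V(x_i))]$ and control this exponential moment directly by invoking the local laws quoted in $\S 2.1$ from \cite{AS21}, which exponentially suppress the single-particle density outside $\Sigma$ and render $\sum_i \zeta_V(x_i)$ negligible on the scale $1/(N\beta)$. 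The negative-$t$ case is handled identically since the pointwise bound involves only $|t|$.
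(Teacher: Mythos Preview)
Your approach is essentially identical to the paper's: both reduce to the pointwise domination $|f|\lesssim \zeta_V$ on $U^c$, absorb $tf$ into the confinement to produce a modified next-order partition function with $\zeta_V$ replaced by a constant multiple, and then invoke the known $o(N)$ comparison of such partition functions (the paper cites \cite[(4.12)]{LS15}, which is the same input you describe). The one point to tighten is the passage from $o(N)$ to $o(t)$: your bound $\widetilde K_{N,\beta}/\KNbeta = e^{o(N)}$ is independent of $t$, so it does not directly give $\exp(o(t))$ for small $t$; the paper handles this by first establishing the estimate at the endpoint $t=\pm K\beta N$ and then interpolating via H\"older, $\Esp e^{t\Fluct}\le(\Esp e^{K\beta N\Fluct})^{t/(K\beta N)}=e^{o(t)}$, which you should make explicit.
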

\begin{proof}
We show that the fluctuations of $\xi(1-\eta)$ should be small simply due to the fact that there are not many points outside of $U$. The approach is borrowed from \cite{LS18} and uses the expansion of partition functions in terms of $\zeta_V$. First, observe that 
\begin{align*}
\left|\int \xi(1-\eta) \left(\sum \delta_{x_i}-N\muv\right)\right|&=\left|\sum_{x_i \notin U} \xi(1-\eta)(x_i)\right| \leq \frac{1}{K}\sum_{x_i \notin U}\zeta_V(x_i)
\end{align*}
because $V \gtrsim (1+\epsilon) \log|x|$ (\textbf{A1}) and $\g \ast \muv +\log|x| \rightarrow 0$ as $|x|\rightarrow 0$ implies $\zeta_V \gtrsim \log|x| \gtrsim |\xi(x)|$ as $|x|\rightarrow +\infty$.
Now, 
\begin{equation*}
\Esp \exp \left(t \int \xi(1-\eta) \left(\sum \delta_{x_i}-N\muv\right)\right) \leq \Esp \exp \left(\frac{|t|}{K}\sum_{i=1}^N \zeta(x_i)\right).
\end{equation*}
Choose $t=\pm K\beta N$. Then,
\begin{equation*}
\Esp \exp \left( \pm K\beta N \int \xi(1-\eta)\left(\sum \delta_{x_i}-N\muv\right)\right)\leq \Esp \exp \left(\beta N \sum \zeta(x_i)\right). 
\end{equation*}
Using \cite[(4.12)]{LS15} we find 
\begin{equation*}
\log\Esp \exp \left(\beta N \sum \zeta(x_i)\right)=\log \KNbeta\left(\muv, \frac{1}{2}\zeta\right)-\log \KNbeta(\muv, \zeta)=o(N).
\end{equation*}
H\"older then gives 
\begin{equation*}
\Esp \exp \left(t \int \xi(1-\eta)\left(\sum \delta_{x_i}-N\muv\right)\right)=\exp(o(t))
\end{equation*}
for $|t|\leq K\beta N$.
\end{proof}

We turn to controlling the fluctuations of $\xi$ in $U$. The idea is to use Johansson's method \cite{J98} coupled with a transport approach as in \cite{LS18}. We opt to use the Taylor expansion approach of \cite{BLS18} and \cite{P24} due to the ease with which it allows us to relax the regularity of $\xi$. First, we expand the fluctuations along a transport.
\begin{prop}\label{expansion}
%We have 
Let $\xi$ be a measureable test function. Then,
\begin{equation}\label{expansion1}
\Esp_{\PNbeta}\left[\exp\(-\beta t N \Fluct_N(\xi)\)\right]=e^{\beta tN^2\int \xi~d\muv}\frac{\ZNbeta^{\Vt}}{\ZNbeta},
\end{equation}
and
\begin{equation*}
\ZNbeta^{\Vt}=\int \exp \left(-\beta \left(\sum_{i \ne j}\frac{1}{2}\g(x_i-x_j)+N\sum V_t(x_i)\right)\right)~d\XN.
\end{equation*} 
Furthermore, we can expand
\begin{equation}\label{splitt}
\Esp_{\PNbeta}\left[\exp\(- \beta t N\Fluct_N(\xi)\)\right]=e^{T_0}\Esp_{\PNbeta}\left[\exp \left(T_1+T_2\right)
\right],
\end{equation}
where
\begin{align}
\nonumber T_0&=-\beta N^{2}\biggl(\frac{1}{2}\iint (\g(\phi_t(x)-\phi_t(y))-\g(x-y))~d\muv(x)d\muv(y)+\int (V_t\circ \phi_t-V)(x)~d\muv(x)\biggr) \\
\label{t0}&+N \int \log \det D\phi_t(x)~d\muv(x)+t\beta N^2 \int \xi(x)~d\muv(x)\\
\label{t1} T_1&= -\beta N\int \bigg( \int (\g(\phi_t(x)-\phi_t(y))-\g(x-y))~d\muv(y)+(V_t \circ \phi_t -V)(x)+\log \det D\phi_t\biggr)~d\fluct_N(x) \\
\label{t2}T_2&=-\frac{\beta}{2}\int \int_{\Delta^c}(\g(\phi_t(x)-\phi_t(y))-\g(x-y))~d\fluct_N(y)d\fluct_N(x).
\end{align}

\end{prop}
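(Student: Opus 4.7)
The plan is to verify the two identities in sequence; the argument is a Johansson-style transport change of variables combined with a three-level decomposition $\sum_i \delta_{x_i} = N\muv + \fluct_N$ into deterministic, linear, and quadratic pieces.

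For (\ref{expansion1}), I would first write $\Fluct_N(\xi) = \sum_i \xi(x_i) - N \int \xi~d\muv$ so the deterministic shift pulls out of the expectation as the prefactor $e^{\beta t N^2 \int \xi~d\muv}$. The remaining tilted integral $\int e^{-\beta \HNV(\XN) - \beta t N \sum_i \xi(x_i)}~d\XN$ repackages exactly as $\ZNbeta^{V_t}$ once I set $V_t := V + t\xi$, since the convention $N V_t$ in (\ref{Hamil}) absorbs the factor $\beta t N \sum_i \xi$ into $\beta N \sum_i V_t$. This gives (\ref{expansion1}) immediately.

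For (\ref{splitt}), I would run the transport change of variables $x_i = \phi_t(y_i)$ inside $\ZNbeta^{V_t}$ (a global diffeomorphism for $t$ small, under the Lipschitz assumption on the underlying vector field), producing the Jacobian factor $\prod_i \det D\phi_t(y_i)$ which contributes $\sum_i \log \det D\phi_t(y_i)$ in the exponent. Next, I would add and subtract $\g(y_i - y_j)$ inside the pair sum and $V(y_i)$ inside the confinement sum so that $\HNV(\YN)$ reassembles in the exponential, leaving the corrections $\g(\phi_t(x) - \phi_t(y)) - \g(x-y)$ and $V_t \circ \phi_t - V$. Dividing by $\ZNbeta$ then expresses $\ZNbeta^{V_t}/\ZNbeta$ as an expectation under $\PNbeta$ of the exponential of the correction plus Jacobian terms.

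To split this expectation into $T_0, T_1, T_2$, I would then substitute $\sum_i \delta_{y_i} = N \muv + \fluct_N$ in each sum: any single sum $\sum_i f(y_i) = N \int f~d\muv + \int f~d\fluct_N$, and any symmetric pair sum
\[
\sum_{i \ne j} F(y_i, y_j) = N^2 \iint F~d\muv^{\otimes 2} + 2N \iint F(x,y)~d\muv(y)~d\fluct_N(x) + \iint_{\Delta^c} F~d\fluct_N \otimes d\fluct_N,
\]
using that $\muv$ has a bounded density (from (\textbf{A1})--(\textbf{A2}), $\muv = \Delta V / \cd$) so the diagonal slice of $\muv^{\otimes 2}$ has measure zero and may be dropped freely on the first two terms. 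Grouping the deterministic $\muv$ and $\muv^{\otimes 2}$ pieces together with the prefactor $\beta t N^2 \int \xi~d\muv$ from the first step yields (\ref{t0}); the linear-in-$\fluct_N$ pieces yield (\ref{t1}); and the $\fluct_N^{\otimes 2}$ piece, with its $\Delta^c$ restriction now inherited honestly, yields (\ref{t2}).

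The main obstacle is little more than careful bookkeeping: tracking the signs and prefactors ($-\beta$ on the interaction and confinement sums, $+1$ on the Jacobian sum) consistently through the three levels of decomposition, and verifying that $\phi_t$ is a genuine $C^1$ diffeomorphism of $\R^2$ so the change of variables is legitimate. No genuine analytic input is required at this stage; the transport equation (\ref{IE-MT}) and the regularity hypotheses on $\xi$ and $\psi$ only enter later, when one actually bounds $T_0$, linearizes $T_1$ in powers of $t$, and controls the expectation of $e^{T_2}$ via the electric energy machinery cited from \cite{AS21,S23}.
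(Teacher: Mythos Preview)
Your proposal is correct and follows essentially the same route as the paper: pull out the deterministic centering to get $\ZNbeta^{V_t}/\ZNbeta$, apply the change of variables $y_i=\phi_t(x_i)$ to produce the $\g\circ\phi_t-\g$, $V_t\circ\phi_t-V$, and Jacobian corrections, and then expand $\sum_i\delta_{x_i}=N\muv+\fluct_N$ to sort terms into $T_0,T_1,T_2$. Your explicit justification for dropping the diagonal on the $\muv^{\otimes2}$ and cross terms, and your remark that the Jacobian sum carries prefactor $+1$ rather than $-\beta N$, are both worth keeping (the latter in fact matches the paper's proof but not the displayed formula (\ref{t1}), where the $\log\det D\phi_t$ has been absorbed into the $-\beta N$ bracket; this is a typo in the statement).
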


\begin{proof}
With the change of variables $y_i=\phi_t(x_i)$ and $\phi_t:\R^d\rightarrow \R^d$ the map $\text{Id}+t\psi$ we obtain 
\begin{align*}
& \Esp_{\PNbeta}\left[\exp\(- \beta t N\Fluct_N(\xi)\)\right]=\frac{\exp\( \beta  t N^{2} \int \xi~d\muv\) }{\ZNbeta}
\\ & \int \exp \biggl(-\beta  \left(\sum_{i \ne j}\frac{1}{2}\g(\phi_t(x_i)-\phi_t(x_j))+N\sum_{i=1}^NV_t(\phi_t(x_i))\right)+\sum_i \log \det D\phi_t(x_i)\biggr)~d\XN \\
&=\exp\(- \beta  t N^{2} \int \xi~d\muv\) \Esp_{\PNbeta} \biggl[\exp \biggl(-\beta \biggl( \frac{1}{2}\sum_{i \ne j}(\g(\phi_t(x_i)-\phi_t(x_j))-\g(x_i-x_j))+ \\
&\hspace{2cm}N \sum_{i=1}^N (V_t\circ \phi_t-V)(x_i)\biggr)+\sum_{i=1}^N \log \det D\phi_t(x_i)\biggr)\biggr].
\end{align*}
Writing  $$\fluct_N:=\sum_{i=1}^N \delta_{ x_i}-N\muv$$ we find
\begin{equation*}
\Esp_{\PNbeta}
\left[\exp\(- \beta t N \Fluct_N(\xi)\)\right]
=e^{T_0}\Esp_{\PNbeta}\left[\exp \left(T_1+T_2\right)\right]
\end{equation*}
with 
\begin{align*}
T_0&=-\beta N^{2}\biggl(\frac{1}{2}\iint (\g(\phi_t(x)-\phi_t(y))-\g(x-y))~d\muv(x)d\muv(y)+ \\
&\int (V_t\circ \phi_t-V)(x)~d\muv(x)\biggr) +N \int \log \det D\phi_t(x)~d\muv(x)+ t  \beta N^{2}  \int \xi(x)~d\muv(x)\\
T_1&= -\beta N\int \left( \int (\g(\phi_t(x)-\phi_t(y))-\g(x-y))~d\muv(y)+(V_t \circ \phi_t -V)(x)\right)~d\fluct_N(x) \\
&+\int \log \det D\phi_t ~d\fluct_N(x)\\
T_2&=-\frac{\beta}{2}\iint_{\Delta^c}(\g(\phi_t(x)-\phi_t(y))-\g(x-y))~d\fluct_N(y)d\fluct_N(x).
\end{align*}
\end{proof}

First, we control $T_0$.
\begin{prop}[Control of $T_0$]\label{Control of t0}
Suppose $\psi$ is Lipschitz and let $T_0$ be as in (\ref{t0}). Then,
\begin{equation}\label{t0 control}
|T_0|\lesssim \beta N^2t^2\(\|\psi\|_{C^{0,1}}^2+\|\xi\|_{C^{0,1}}\|\psi\|_{L^\infty}\)+N|t|\|\psi\|_{C^{0,1}}.
\end{equation}
In particular, if we take $t \sim \frac{1}{N}$, $T_0$ is order $1$.
\end{prop}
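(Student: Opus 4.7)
The strategy is to Taylor-expand each piece of $T_0$ to second order in $t$, observe that the first-order contributions cancel thanks to the Euler--Lagrange equation for $\muv$, and then bound the second-order remainders term by term. Writing $\phi_t = \mathrm{Id} + t\psi$, the three pieces to expand are: the double integral of $\g(\phi_t(x) - \phi_t(y)) - \g(x-y)$ against $d\muv \otimes d\muv$, the term $\int (V_t \circ \phi_t - V)\, d\muv$, and $\int \log \det D\phi_t\, d\muv$. A direct unraveling of (\ref{expansion1}) forces $V_t = V + t\xi$, so $V_t \circ \phi_t - V = V(\phi_t) - V + t\xi(\phi_t)$.

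The delicate piece is the logarithmic interaction. Setting $u = x-y$ and $v = \psi(x) - \psi(y)$, the Lipschitz bound $|v| \leq \|\psi\|_{C^{0,1}}|u|$ controls $|v|/|u|$ uniformly, so
\begin{equation*}
\g(\phi_t(x) - \phi_t(y)) - \g(x-y) = -\tfrac{1}{2}\log\!\left(1 + 2t\tfrac{u \cdot v}{|u|^2} + t^2 \tfrac{|v|^2}{|u|^2}\right) = t\, \nabla \g(x-y) \cdot v + R(x,y;t),
\end{equation*}
with $|R(x,y;t)| \lesssim t^2 \|\psi\|_{C^{0,1}}^2$ uniformly in $(x,y)$ for $|t|$ small enough. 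Similarly, $V(\phi_t(x)) - V(x) = t \nabla V(x) \cdot \psi(x) + O(t^2 \|\psi\|_{L^\infty}^2 \|V\|_{C^2(\Sigma)})$ via (A1), $t \xi(\phi_t(x)) - t\xi(x) = O(t^2 \|\psi\|_{L^\infty} \|\xi\|_{C^{0,1}})$, and $\log \det D\phi_t = t\, \mathrm{tr}(D\psi) + O(t^2 \|D\psi\|_{L^\infty}^2)$.

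Collecting first-order contributions, symmetrization gives
$\iint \nabla \g(x-y) \cdot (\psi(x) - \psi(y))\, d\muv\,d\muv = 2 \int \psi \cdot \nabla(\g \ast \muv)\, d\muv$, and on $\Sigma$ the Euler--Lagrange equation (\ref{Euler-Lagrange equation}) yields $\nabla(\g \ast \muv) = -\nabla V$. Combined with the $t(\nabla V \cdot \psi + \xi)$ contribution from $V_t \circ \phi_t - V$ and the explicit $+ t\beta N^2 \int \xi\, d\muv$ already present in (\ref{t0}), the $\xi$ and $\nabla V \cdot \psi$ terms cancel in pairs, leaving only the linear Jacobian term $Nt \int \mathrm{tr}(D\psi)\, d\muv$, which is bounded by $N|t| \|\psi\|_{C^{0,1}}$. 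Each second-order remainder is then bounded directly: the double integral yields $\beta N^2 t^2 \|\psi\|_{C^{0,1}}^2$, the $V$ and $\xi$ Taylor remainders give $\beta N^2 t^2 (\|\psi\|_{L^\infty}^2 + \|\psi\|_{L^\infty} \|\xi\|_{C^{0,1}})$, and the Jacobian's quadratic remainder gives $Nt^2 \|\psi\|_{C^{0,1}}^2$; summing matches the stated bound. I foresee no significant obstacle: the main care required is the uniform Taylor expansion of the logarithm, which the Lipschitz hypothesis on $\psi$ handles cleanly via the key observation $|v|/|u| \leq \|\psi\|_{C^{0,1}}$.
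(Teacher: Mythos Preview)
Your proposal is correct and follows essentially the same approach as the paper: Taylor-expand each piece of $T_0$, use the Lipschitz bound on $\psi$ to control the logarithmic interaction uniformly via $|v|/|u|\le \|\psi\|_{C^{0,1}}$, cancel the first-order terms through the Euler--Lagrange equation $\nabla(\g\ast\muv+V)=0$ on $\Sigma$, and bound the remaining Jacobian contribution by $N|t|\|\psi\|_{C^{0,1}}$. The only cosmetic difference is that the paper bounds $\log\det D\phi_t$ directly in $L^\infty$ rather than expanding it to second order, but this does not affect the argument.
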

\begin{proof}
This follows from the definition of $\phi_t$ and a first order Taylor expansion. We first rewrite
\begin{multline}
\frac{1}{2}\iint -\log|x+t\psi(x)-y-t\psi(y)|~d\muv(x)d\muv(y)-\frac{1}{2}\iint-\log|x-y|~d\muv(x)d\muv(y)
= \\
\frac{1}{2}\iint -\log \left|\frac{x-y}{|x-y|}+t\frac{\psi(x)-\psi(y)}{|x-y|}\right|~d\muv(x)d\muv(y)
\end{multline}
and then Taylor expand the logarithm to obtain 
\begin{equation*}
-t\iint \frac{x-y}{|x-y|^2} \cdot (\psi(x)-\psi(y))~d\muv(x)d\muv(y)+O\(t^2\|\psi\|_{C^{0,1}}^2\).
\end{equation*}
Next,
\begin{multline}
\int(V(x+t\psi(x))-V(x))~d\muv(x)+t\int \xi(x+t\psi(x))~d\muv(x)-t\int \xi(x)~d\muv(x)
=\\
t\int \nabla V(x)\cdot \psi(x)~d\muv(x)+O\(t^2\|\psi\|_{L^\infty}^2+t^2\|\xi\|_{C^{0,1}}\|\psi\|_{L^\infty}\).
\end{multline}
Since $\nabla \(\int -\log|x-y|~d\muv(y)+V(x)\)=0$ on $\supp(\muv)$ we have via differentiation and symmetry that
\begin{equation*}
-t\iint \frac{x-y}{|x-y|^2} \cdot (\psi(x)-\psi(y))~d\muv(x)d\muv(y)+t\int \nabla V(x)\cdot \psi(x)~d\muv(x)=0.
\end{equation*}
Finally, $\|\log \det D\phi_t\|_{L^\infty}\lesssim |t|\|\psi\|_{C^{0,1}}$. Combining all of these with the definition of $T_0$ in (\ref{t0}) yields the result.
\end{proof}

Next, we will control $T_1$ by choosing a transport $\psi$ that causes $T_1$ to vanish at order $t$. We will accomplish this by inverting (\ref{IE-MT}). First, we need the following careful estimate since the second derivatives of $\g$ are not bounded.
\begin{lem}[Careful Taylor Expansion]\label{CTE}
Let $|t|=o_N(1)$ and suppose that $\psi$ is Lipschitz. For any $|y-x|\geq \epsilon$ and large enough $N$,
\begin{align*}
\bigg|\g((x-y)+t(\psi(x)-\psi(y)))&-\g(x-y)-t\nabla \g(x-y)\cdot (\psi(x)-\psi(y))\bigg| \\
&\leq Ct^2\|\psi\|_{C^{0,1}}^2
\end{align*}
with constant independent of $\epsilon$ and $\beta$.
\end{lem}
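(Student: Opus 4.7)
The core difficulty is that a naive second-order Taylor expansion of $\g=-\log|\cdot|$ about the base point $x-y$ produces a remainder involving $D^2\g$, whose size blows up like $1/|x-y|^2$ near the singularity. On the face of it, this forces a bound depending badly on the lower cutoff $\epsilon$. The Lipschitz hypothesis on $\psi$ is precisely what cancels this: $|\psi(x)-\psi(y)|\leq \|\psi\|_{C^{0,1}}|x-y|$, so the perturbation is of the same order as $|x-y|$ itself. My plan is to make this cancellation rigorous by a scaling reduction to a one-parameter Taylor expansion on the unit sphere, where no singularity is present and the estimate is uniform in $\epsilon$.

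Concretely, set $r:=|x-y|$, $u:=(x-y)/r$, $v:=(\psi(x)-\psi(y))/r$, so that $|u|=1$ and $|v|\leq L:=\|\psi\|_{C^{0,1}}$. Using the scaling identity $\g(rz)=\g(z)-\log r$ and the identity $\nabla\g(x-y)=-u/r$, the quantity one must estimate collapses to the $r$-independent expression
\[
-\log|u+tv|+t\, u\cdot v.
\]
Defining $f(s):=-\log|u+sv|$, this is exactly $f(t)-f(0)-tf'(0)$, since $f(0)=0$ and $f'(0)=-u\cdot v$.

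Now I would apply Taylor's theorem with Lagrange remainder to $f$ on $[0,t]$: there exists $\xi$ between $0$ and $t$ with $f(t)-f(0)-tf'(0)=\tfrac{t^2}{2}f''(\xi)$. Direct differentiation gives
\[
f''(s)=-\frac{|v|^2}{|u+sv|^2}+\frac{2((u+sv)\cdot v)^2}{|u+sv|^4},
\]
and Cauchy--Schwarz then yields $|f''(s)|\leq 3|v|^2/|u+sv|^2\leq 3L^2/|u+sv|^2$. The only remaining step is a uniform lower bound on $|u+\xi v|$. Since $|t|=o_N(1)$, for $N$ large enough we have $|t|L\leq 1/2$, whence $|\xi v|\leq 1/2$ and therefore $|u+\xi v|\geq 1/2$. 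This gives $|f''(\xi)|\leq 12L^2$ and hence the claim with an absolute constant (e.g.\ $C=6$), uniformly in $\epsilon$ and $\beta$.

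I do not expect any real obstacle. The entire argument is essentially one computation; the only conceptual point is the rescaling that sweeps the $r=|x-y|$ dependence into a pure logarithmic shift and reduces everything to a Taylor remainder on the unit sphere. The threshold $|t|\leq 1/(2L)$ depends only on $\|\psi\|_{C^{0,1}}$, never on $\epsilon$, which is exactly what the lemma demands.
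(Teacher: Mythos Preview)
Your proof is correct and follows essentially the same approach as the paper: a second-order Taylor expansion of $\g$ whose remainder is controlled by combining the Lipschitz bound $|\psi(x)-\psi(y)|\leq \|\psi\|_{C^{0,1}}|x-y|$ (cancelling the $1/|x-y|^2$ singularity) with the smallness $|t|\|\psi\|_{C^{0,1}}\leq 1/2$ to lower-bound the denominator. The only cosmetic differences are that you rescale by $r=|x-y|$ up front and use the Lagrange form of the remainder, whereas the paper works directly with $\vec a=x-y$, $\vec v=t(\psi(x)-\psi(y))$ and the integral remainder.
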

\begin{proof}
With $|y-x|\geq \epsilon$ we have enough smoothness of $\g$ to Taylor expand; the quantity on the left hand side is the function minus its first Taylor polynomial, whose remainder is given by the integral remainder
\begin{equation*}
\sum_{i,j}v_iv_j\int_0^1 (1-s)(\partial_{i,j}\g)(\vec a+s\vec v)~ds,
\end{equation*}
where we have introduced the shorthand $\vec a=x-y$ and $\vec v=t(\psi(x)-\psi(y))$. Computing directly we have
\begin{equation*}
\partial_{i,j}\g(x)=\frac{-1}{|x|^{2}}\indic_{i=j}+\frac{2x_ix_j}{|x|^{4}}
\end{equation*}
and so the absolute value of the remainder is given by 
\begin{align*}
&\left|-\int_0^1(1-s)\frac{|\vec v|^2}{|\vec a+s\vec v|^2}~ds +\sum_{i,j}\int_0^1 \frac{2(1-s)(\vec a+s\vec v)_i(\vec a+s\vec v)_jv_iv_j}{|\vec a+s\vec v|^4}~ds\right|\\
&\lesssim \int_0^1(1-s)\frac{|\vec v|^2}{|\vec a+s\vec v|^2}~ds+\int_0^1 2(1-s) \frac{(\vec v \cdot (\vec a+s\vec v))^2}{|\vec a+s\vec v|^4}~ds \\
&\lesssim \int_0^1(1-s)\frac{|\vec v|^2}{|\vec a+s\vec v|^2}~ds.
%&-\frac{n(d-2)}{2}\frac{1}{|\vec a+c\vec v|^{d}}\sum_{i=1}^n v_i^2+\frac{d(d-2)}{2}\sum_{i,j}\frac{(\vec a+c\vec v)_i(\vec a+c\vec v)_j v_iv_j}{|\vec a+c\vec v|^{d+2}} \\
%&=\frac{-n(d-2)}{2}\frac{|\vec v|^2}{|\vec a+c\vec v|^2}+\frac{d(d-2)}{2|\vec a+c\vec v|^{d+2}}(\vec v \cdot (\vec a+c \vec v))^2 \\
%&\lesssim \frac{|\vec v|^2}{|\vec a+c\vec v|^{d}}
\end{align*}
with constant independent of $\epsilon$. Next, since $\|t\psi\|_{C^{0,1}}=o_N( 1)$, we have for large enough $N$ that $|\vec a+c \vec v|\geq \frac{1}{2}|\vec a|$. Thus,
\begin{multline*}
\left|\g((x-y)+t(\psi(x)-\psi(y)))-\g(x-y)-t\nabla \g(x-y)\cdot (\psi(x)-\psi(y))\right| \\
\leq Ct^2\frac{|\psi(x)-\psi(y)|^2}{|x-y|^2} \leq Ct^2\|\psi\|_{C^{0,1}}^2
\end{multline*}
with constant independent of $\epsilon$, as desired.
\end{proof}

\begin{prop}[Control of $T_1$]\label{Control of t1}
Suppose $\psi$ is Lipschitz and let $T_1$ be as in (\ref{t1}). Suppose $\psi$ solves (\ref{IE-MT}).
%\begin{equation}\label{IE}
%\int \nabla \g(x-y)\cdot (\psi(x)-\psi(y))~d\muv(y)+\nabla V(x)\cdot \psi(x)+\xi(x)=0.
%\end{equation}
Then,
\begin{equation}\label{t1 control}
|T_1|\lesssim \beta N^2t^2\(1+\| \xi\|_{C^{0,1}}\)\(\|\psi\|_{L^\infty}+\|\psi\|_{C^{0,1}}^2\)+|t|N\|\psi\|_{C^{0,1}}.
\end{equation}
In particular, if $|t|\sim \frac{1}{N}$, $T_1$ is order $1$.
\end{prop}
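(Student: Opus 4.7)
\noindent\emph{Strategy.} The plan is a second-order Taylor expansion of the integrand of $T_1$ in $t$, with the hypothesis (\ref{IE-MT}) engineered to kill the leading-order part up to a harmless additive constant; the surviving terms are then controlled by crude particle counting. Split the integrand of $T_1$ as $A(x)+B(x)+C(x)$ with $A(x):=\int(\g(\phi_t(x)-\phi_t(y))-\g(x-y))\,d\muv(y)$, $B(x):=(V_t\circ\phi_t-V)(x)$ (with $V_t=V+t\xi$, as identified in the proof of Proposition~\ref{expansion}), and $C(x):=\log\det D\phi_t(x)$. Lemma~\ref{CTE} yields the pointwise Taylor remainder of the singular kernel $\g$ with a constant independent of the diagonal cutoff, so integrating against the bounded density $d\muv(y)$ produces $\|R_A\|_{L^\infty}\lesssim t^2\|\psi\|_{C^{0,1}}^2$; routine second-order Taylor (using $\|V\|_{C^2}\lesssim 1$ from (\textbf{A1})) gives $\|R_B\|_{L^\infty}\lesssim t^2(\|\psi\|_{L^\infty}^2+\|\psi\|_{L^\infty}\|\xi\|_{C^{0,1}})$ and $\|R_C\|_{L^\infty}\lesssim t^2\|\psi\|_{C^{0,1}}^2$.

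\noindent\emph{Cancellation at order $t$.} Assemble the linear-in-$t$ contributions of $A+B$, use $\int\nabla\g(x-y)\,d\muv(y)=\nabla\mathfrak{h}_0(x)$, and invoke the Euler--Lagrange identity $\nabla\mathfrak{h}_0+\nabla V=\nabla\zeta_V$:
\begin{align*}
&t\Bigl[\psi(x)\cdot\nabla\mathfrak{h}_0(x)-\int\nabla\g(x-y)\cdot\psi(y)\,d\muv(y)+\nabla V(x)\cdot\psi(x)+\xi(x)\Bigr] \\
&\qquad= t\Bigl[\psi\cdot\nabla\zeta_V+\xi-\int\nabla\g(x-y)\cdot\psi(y)\,d\muv(y)\Bigr] = t\,c_\xi,
\end{align*}
where the final equality is exactly the hypothesis~(\ref{IE-MT}). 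Since $\fluct_N$ has total mass zero, this constant contributes nothing to $T_1$. The only surviving first-order piece comes from $C$, namely $\int t\,\div\psi\,d\fluct_N$, which I bound crudely by particle counting: $|\int t\,\div\psi\,d\fluct_N|\leq 2N|t|\|\div\psi\|_{L^\infty}\leq 2N|t|\|\psi\|_{C^{0,1}}$, yielding the second summand in~(\ref{t1 control}).

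\noindent\emph{Assembly and main obstacle.} Summing the $L^\infty$ remainder bounds and absorbing $\|\psi\|_{L^\infty}^2\leq\|\psi\|_{L^\infty}\|\psi\|_{C^{0,1}}$ into the product form $(1+\|\xi\|_{C^{0,1}})(\|\psi\|_{L^\infty}+\|\psi\|_{C^{0,1}}^2)$, the trivial estimate $|\int F\,d\fluct_N|\leq 2N\|F\|_{L^\infty}$ combined with the $\beta N$ prefactor of $T_1$ delivers the first summand of~(\ref{t1 control}). The main obstacle is the Taylor expansion of $A$ across the singular set $\{x=y\}$, which fails pointwise but is rescued by Lemma~\ref{CTE}'s $\epsilon$-uniform constant together with the boundedness of the density $\muv$. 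A secondary subtlety is that (\ref{IE-MT}) is only asserted on the neighborhood $U\supset\Sigma$: one either takes $\psi$ supported in $U$ (so that $\phi_t=\mathrm{Id}$ and $C\equiv 0$ outside) or combines the present estimate with the truncation Proposition~\ref{extfluct} to render the exterior contribution negligible.
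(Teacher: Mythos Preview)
Your proof is correct and follows essentially the same route as the paper's: Taylor expand $A,B,C$ to first order using Lemma~\ref{CTE} for the singular kernel, observe that the linear-in-$t$ part of $A+B$ collapses to the constant $c_\xi$ via (\ref{IE-MT}) (you spell out the intermediate identity $\nabla\mathfrak{h}_0+\nabla V=\nabla\zeta_V$ that the paper leaves implicit), and then bound the remainders and the $\log\det D\phi_t$ contribution by crude particle counting against $\fluct_N$. Your remark on the localization to $U$ is a valid observation that the paper defers to the coupling with Proposition~\ref{extfluct} in the proof of Proposition~\ref{genfluct}.
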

\begin{proof}
Let's start with 
\begin{equation*}
 -\beta N\int \left( \int (\g(\phi_t(x)-\phi_t(y))-\g(x-y))~d\muv(y)+(V_t \circ \phi_t -V)(x)\right)~d\fluct_N(x).
 \end{equation*}
 We can rewrite the integrand via Taylor expansion using Lemma \ref{CTE} as
 \begin{multline*}
\int \g(\phi_t(x)-\phi_t(y))~d\muv(y)+V(\phi_t(x))+t\xi(\phi_t(x)) -\int \g(x-y)~d\muv(y)-V(x) \\
=t\left[\int \nabla \g(x-y)\cdot (\psi(x)-\psi(y))~d\muv(y)+\nabla V(x)\cdot \psi(x)+\xi(x)\right] \\
+O\(t^2(1+\| \xi\|_{C^{0,1}})\(\|\psi\|_{L^\infty}+\|\psi\|_{C^{0,1}}^2\)\)
\end{multline*}
where we have taken $\epsilon \downarrow 0$ in the error terms in the expansion of $\g$, since the $O$ is independent of $\epsilon$. Note that the expression in brackets is equivalent to (\ref{IE-MT}), and in particular is constant by assumption (so its fluctuations are zero). Bounding the fluctuation measure by $\sim N$ we find 
\begin{multline}
\left| -\beta N\int \left( \int (\g(\phi_t(x)-\phi_t(y))-\g(x-y))~d\muv(y)+(V_t \circ \phi_t -V)(x)\right)~d\fluct_N(x)\right| \\
\lesssim \beta N^2t^2\(1+\| \xi\|_{C^{0,1}}\)\(\|\psi\|_{L^\infty}+\|\psi\|_{C^{0,1}}^2\).
\end{multline}
Finally, we use again that $\|\log \det D\phi_t\|_{L^\infty}\lesssim t\|\psi\|_{C^{0,1}}$ and control the fluctuation measure again by $\sim N$ to bound 
\begin{equation*}
\left|\int \log \det D\phi_t~d\fluct_N(x)\right|\lesssim |t|N\psi\|_{C^{0,1}},
\end{equation*}
as desired.
\end{proof}

We conclude by a careful analysis of $T_2$. The argument is again a Taylor expansion, appealing also now to the commutator type estimates of \cite{RS22}.
\begin{prop}[Control of $T_2$]\label{Control of t2}
Let $\psi$ be Lipschitz, and suppose $T_2$ is as in (\ref{t2}). Then,
\begin{equation*}
\|T_2\|\lesssim \beta N^2t^2\|\psi\|_{C^{0,1}}^2+\beta |t|N\|\psi\|_{C^{0,1}}+\beta |t|\|\psi\|_{C^{0,1}}\(\FN(\XN,\muv)+\frac{N}{2}\log N\right).
\end{equation*}
\end{prop}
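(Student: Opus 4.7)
The plan is to Taylor expand the symmetric difference $\g(\phi_t(x)-\phi_t(y))-\g(x-y)$ in $t$, splitting $T_2$ into a main linear-in-$t$ piece (to be controlled via a commutator estimate by the next-order energy $\FN$) and a quadratic-in-$t$ remainder (to be handled by crude total-variation bounds). Explicitly, I would write
\[
\g(\phi_t(x)-\phi_t(y)) - \g(x-y) \;=\; t\,\nabla \g(x-y)\cdot(\psi(x)-\psi(y)) \;+\; R_t(x,y),
\]
which by Lemma~\ref{CTE} satisfies $|R_t(x,y)| \lesssim t^2\|\psi\|_{C^{0,1}}^2$ on $\Delta^c$ with constant independent of $|x-y|$. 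This splits $T_2 = T_2^{(1)} + T_2^{(2)}$ in the obvious way.

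For the remainder $T_2^{(2)}$, I would expand $\fluct_N = \sum\delta_{x_i} - N\muv$ so that $|T_2^{(2)}|$ is bounded by the sum of three double integrals of the uniformly bounded $R_t$, each of size $O(N^2\|R_t\|_{L^\infty})$ since $\|\fluct_N\|_{TV}\lesssim N$. This yields $|T_2^{(2)}| \lesssim \beta N^2 t^2 \|\psi\|_{C^{0,1}}^2$, which is the first term in the claimed bound.

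For the linear piece $T_2^{(1)} = -\tfrac{\beta t}{2}\iint_{\Delta^c} \nabla \g(x-y)\cdot(\psi(x)-\psi(y))\,d\fluct_N(x)\,d\fluct_N(y)$, I would invoke the commutator-type estimate of \cite{RS22}: for any Lipschitz vector field $\psi$,
\[
\Bigl|\iint_{\Delta^c} \nabla\g(x-y)\cdot(\psi(x)-\psi(y))\,d\fluct_N(x)\,d\fluct_N(y)\Bigr| \;\lesssim\; \|\psi\|_{C^{0,1}}\Bigl(\FN(\XN,\muv) + \tfrac{N}{2}\log N + N\Bigr).
\]
The proof of this estimate goes through the electric reformulation of $\FN$: one introduces the truncated Coulomb potential $h_\eta = \g*\fluct_N$ at a scale $\eta \sim N^{-1/2}$, rewrites the antisymmetric double integral as the single integral $\int \nabla h_\eta^T (D\psi)\, \nabla h_\eta$ modulo boundary and diagonal corrections of size $O(\|\psi\|_{C^{0,1}} N)$, and then uses $\int|\nabla h_\eta|^2 = 2\pi(\FN(\XN,\muv) + \tfrac{N}{2}\log N) + O(N)$. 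Multiplying by $\beta|t|/2$ produces the remaining two terms of the claimed estimate.

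The principal obstacle is the commutator step itself: a direct pointwise bound on the kernel $\nabla \g(x-y)\cdot(\psi(x)-\psi(y))$ gives only $O(\|\psi\|_{C^{0,1}})$, which upon integration against $\fluct_N \otimes \fluct_N$ would yield the inadmissible bound $\|\psi\|_{C^{0,1}} N^2$. The antisymmetric structure of $(\psi(x)-\psi(y))$ paired with $\nabla \g(x-y)$, together with the electric interpretation of $\FN$, is precisely what replaces this by the energy-controlled bound above, and carrying this out under the sole assumption that $\psi$ is Lipschitz is the key technical point.
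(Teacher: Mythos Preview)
Your proposal is correct and follows essentially the same route as the paper: Taylor expand via Lemma~\ref{CTE}, bound the $O(t^2)$ remainder crudely using $\|\fluct_N\|_{TV}\lesssim N$, and control the linear-in-$t$ piece by directly invoking the commutator estimate \cite[Theorem 1.1]{RS22}. The only difference is that you additionally sketch the electric-field proof of the RS22 estimate, whereas the paper simply cites it as a black box.
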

\begin{proof}
We Taylor expand as before, writing 
\begin{align*}
\g(\phi_t(x)-\phi_t(y))-g(x-y)&=-t\frac{x-y}{|x-y|^2}\cdot (\psi(x)-\psi(y))+O\(t^2\|\psi\|_{C^{0,1}}^2\)\\
&=t \nabla \g(x-y)\cdot (\psi(x)-\psi(y))+O\(t^2\|\psi\|_{C^{0,1}}^2\).
\end{align*}
The error term is immediately controlled by $\beta N^2t^2\|\psi\|_{C^{0,1}}^2$ by controlling the fluctuation measure by $\sim N$. For the main term, we use the commutator estimate of \cite[Theorem 1.1]{RS22}:
\begin{equation*}
\left|\int \nabla \g(x-y)\cdot (\psi(x)-\psi(y))~d\fluct_N(x)d\fluct_N(y)\right|\lesssim \|\psi\|_{C^{0,1}}\left(\FN(\XN,\muv)+\frac{N}{2}\log N+N\right),
\end{equation*}
which establishes the result.
\end{proof}
Coupling all of this together yields Proposition \ref{genfluct}.
\begin{proof}[Proof of Proposition \ref{genfluct}]
Coupling Propositions \ref{extfluct}, \ref{expansion}, \ref{Control of t0}, \ref{Control of t1} and \ref{Control of t2} we find 
\begin{multline}\label{con}
\Esp \left[\exp \left(-\beta tN\Fluct_N(\xi)\right)\right]\lesssim\exp\biggl( \beta N^2t^2\(\|\psi\|_{L^\infty}+\|\psi\|_{C^{0,1}}^2+\|\xi\|_{C^{0,1}}\|\psi\|_{L^\infty}+\|\xi\|_{C^{0,1}}\|\psi\|_{C^{0,1}}^2\) \\
+\max(1,\beta)N|t|\|\psi\|_{C^{0,1}}\biggr)\Esp \left[\exp \left(\beta |t|\|\psi\|_{C^{0,1}}\(\FN(\XN,\muv)+\frac{N}{2}\log N\)\right)\right].
\end{multline}
Using the expansion of the partition function from \cite{SS15-2} or proceeding as in \cite[Lemma 2.15]{LS18}, we find
\begin{equation*}
\Esp \exp \left(t\left(\FN+\frac{N}{2}\log N\right)\right)\leq e^{C\frac{|t|}{\beta}N}
\end{equation*}
for any $|t|\leq \frac{\beta}{2}$. Inserting this into (\ref{con}) and simplifying somewhat, we find 
\begin{multline*}
\Esp \left[\exp \left(-\beta tN\Fluct_N(\xi)\right)\right]\lesssim \\
\exp\( \beta N^2t^2\(\( \|\psi\|_{L^\infty}+\|\psi\|_{C^{0,1}}^2\)(1+\|\xi\|_{C^{0,1}})
\)+\max(1,\beta)N|t|\|\psi\|_{C^{0,1}}\),
\end{multline*}
as desired.
\end{proof}
Constructing a transport as in \cite[Lemma 3.4]{LS18} allows us to invert (\ref{IE-MT}) and obtain a concentration result with less regularity on the test function $\xi$. This is Proposition \ref{Coro1}.
\begin{proof}[Proof of Proposition \ref{Coro1}]
We can apply Proposition \ref{genfluct} as soon as we can invert (\ref{IE-MT}). This can be accomplished using the transport map introduced in \cite[Theorem 3.4]{LS18}, which solves 
\begin{equation}\label{PDE}
\begin{cases}
\div(\muv \psi)=-\frac{1}{\cd}\Delta \xi & \text{ in }\Sigma_i \\
\psi \cdot \vec n=\frac{1}{\cd \muv}[\nabla \xi^\Sigma]\cdot \vec n& \text{ on }\partial \Sigma_i \\
\psi=\(\xi^\Sigma-\xi\)\frac{\nabla \zeta_V}{|\nabla \zeta_V|^2}+\psi^\perp & \text{in }\Sigma^c
\end{cases}
\end{equation}
on each $\Sigma_i$, where $\psi^\perp$ is orthogonal to $\nabla \zeta_V$ and is chosen to make the transport map continuous at the boundary in the tangential direction (it is continuous in the normal direction automatically by the behavior of $\nabla \zeta_V$ at the interface). $[\nabla \xi^\Sigma]$ denotes the jump in the harmonic extension across the interface of the droplet. 

In the one-cut regime, no additional assumptions are needed to guarantee a solution since $\int_{\partial \Sigma}\nabla \xi^\Sigma \cdot \hat{n}$=0; this can be seen by integrating by parts in $B_R$ with $R \rightarrow +\infty$ and using classical gradient estimates on harmonic functions. In the multicut regime this may not be true on every connected component, hence the additional assumption (\ref{MC}) as in \cite[Lemma 3.4]{LS18}.

Let us discuss briefly why the transport map above inverts our equation (\ref{IE-MT}). Recall that we want to solve \begin{equation*}
\psi \cdot \nabla \zeta_V+\xi-\int \nabla \g(x-y)\cdot\psi\muv(y)=c_\xi
\end{equation*}
for some constant $c_\xi$. In $\Sigma$, $\nabla \zeta_V=0$; integrating by parts in $y$ and substituting (\ref{PDE}) inside $\Sigma$ and on $\partial \Sigma$ we find
\begin{align*}
\xi(x)-\int \nabla \g(x-y)\cdot \psi\muv(y)&=\xi(x)-\int_\Sigma \g(x-y)\div(\psi \muv)(y)+\int_{\partial \Sigma}\g(x-y)\psi \muv(y)\cdot \hat{n} \\
&=\xi(x)-\int_{\Sigma}\g(x-y)\frac{-\Delta \xi}{\cd}(y)+\int_{\partial \Sigma}\g(x-y)[\nabla \xi^\Sigma] \cdot \hat n.
\end{align*}
Outside, $\psi \cdot \nabla \zeta_V=\xi^\Sigma-\xi$, so 
\begin{equation*}
\psi \cdot \nabla \zeta_V+\xi-\int \nabla \g(x-y)\cdot\psi\muv(y)=\xi^\Sigma(x)-\int_{\Sigma}\g(x-y)\frac{-\Delta \xi}{\cd}(y)+\int_{\partial \Sigma}\g(x-y)[\nabla \xi^\Sigma] \cdot \hat n.
\end{equation*}
Let $w(x)$ denote 
\begin{equation}\label{w}
w(x)=\begin{cases}
\xi(x)-\int_{\Sigma}\g(x-y)\frac{-\Delta \xi}{\cd}(y)+\int_{\partial \Sigma}\g(x-y)[\nabla \xi^\Sigma] \cdot \hat n & \text{if }x\in \Sigma \\
\xi^\Sigma(x)-\int_{\Sigma}\g(x-y)\frac{-\Delta \xi}{\cd}(y)+\int_{\partial \Sigma}\g(x-y)[\nabla \xi^\Sigma] \cdot \hat n & \text{if }x \in \Sigma^c.
\end{cases}
\end{equation}
By classical results on single-layer potentials (cf. \cite[Sec. II.3]{DL90}, \cite[Chapter 3]{F95} and \cite[Appendix A]{SS18} for a review) $\int_{\partial \Sigma}\g(x-y)[\nabla \xi^\Sigma] \cdot \hat n$ is harmonic in $\Sigma$ and $\Sigma^c$. Furthermore, for $x \in \Sigma$
\begin{equation*}
\Delta \left(\xi(x)-\int_{\Sigma}\g(x-y)\frac{-\Delta \xi}{\cd}(y)\right)=\Delta \xi-\Delta \xi=0
\end{equation*}
and for $x \in \Sigma^c$, 
\begin{equation*}
\Delta \left(\xi^\Sigma(x)-\int_{\Sigma}\g(x-y)\frac{-\Delta \xi}{\cd}(y)\right)=0-0=0
\end{equation*}
so $w$ is harmonic in $\Sigma$ and $\Sigma^c$. Since $\nabla \g$ is locally integrable, $\int_{\Sigma}\g(x-y)\frac{-\Delta \xi}{\cd}(y)$ has a continuous normal derivative across the interface $\partial \Sigma$. Using classical formulae for the jump in the normal component of the gradient of a single layer potential (cf. \cite[Theorem A.1]{SS18}, \cite{DL90})] we have
\begin{equation*}
\partial_{\hat{n}, \text{out}}w-\partial_{\hat{n}, \text{in}}w=[\nabla \xi^\Sigma]\cdot \hat{n}-[\nabla \xi^\Sigma]\cdot \hat{n}=0
\end{equation*}
where $\partial_{\hat{n}, \text{out}}w$ and $\partial_{\hat{n}, \text{in}}w$ denote the normal derivatives of $w$ taken from outside and inside $\Sigma$, respectively. So, no divergence is created at the boundary when we make the piecewise definition (\ref{w}) for $w(x)$. As a result, $w(x)$ is harmonic on $\R^2$ and is bounded by $O(\log|x|)$ as $|x|\rightarrow+\infty$ and is therefore constant by Liouville's theorem. Hence, the map $\psi$ given in (\ref{PDE}) solves (\ref{IE-MT}).

Finally, since $\Delta \xi \in C^{0,\alpha}(\Sigma)$ we have the Schauder estimate $\|\psi\|_{C^{1,\alpha}(\Sigma)}\lesssim \|\Delta \xi\|_{C^{0,\alpha}(\Sigma)}$. In $U \setminus \Sigma$ the map is Lipschitz away from the boundary; we also know that it is continuous across to the boundary by choice of the boundary condition and so we have the estimate 
\begin{equation*}
\|\psi\|_{C^{0,1}(U)}\lesssim \|\xi\|_{C^{0,1}(U)}+\|\Delta \xi\|_{C^{0,\alpha}(\Sigma)}
\end{equation*}
Substituting this into (\ref{concentration}) yields the estimate (\ref{corcon}).
\end{proof}
%%%%%%%%%%%%%%%%%%

%%%%%%%%%%
\bibliographystyle{amsalpha}
\bibliography{GMC}{}

%%%%%%%%%%
\end{document}